\newtheorem{thm}{Theorem}[section]
\newtheorem{prop}[thm]{Proposition}
\newtheorem{rem}[thm]{Remark}
\algnewcommand{\LeftComment}[1]{\Statex \(\triangleright\) #1}
\algrenewcommand\algorithmicrequire{\textbf{Input:}}
\algrenewcommand\algorithmicensure{\textbf{Output:}}
\DeclareMathOperator{\Tr}{Tr}
\DeclareMathOperator*{\argmin}{arg\,min}
\newcommand{\red}[1]{{}}
\newcommand{\blue}[1]{#1}
\providecommand{\keywords}[1]
{
  \small	
  \textbf{\text{Keywords: }} #1
}
\title{}
\title{\textbf{Adjoint-based optimal control of contractile elastic bodies. Application to limbless locomotion on frictional substrates}}
\author{Ashutosh Bijalwan$^{\blue{1,2}}$, Jos\'e J. Mu\~noz$^{1,2,3,4}$  \\
       \small $^{1}$Universitat Polit\`ecnica de Catalunya (UPC), Barcelona, Spain \\
        \small $^{2}$Centre Internacional de Mètodes Num\`erics en Enginyeria (CIMNE), Barcelona, Spain \\
        \small $^{3}$Dept. of Mathematics, UPC, Barcelona, Spain \\
        \small $^{4}$ Institut de Matem\`atiques de la UPC - BarcelonaTech, Barcelona, Spain. \\
}
\date{} 
\newcommand{\vect}[1]{\boldsymbol{#1}}
\newcommand{\qvect}[1]{\mathbf{#1}}
\newcommand{\xf}{\vect x}
\newcommand{\x}{\qvect x}
\newcommand{\bvf}{\vect v}
\newcommand{\bv}{\qvect v}
\newcommand{\buf}{\vect u}
\newcommand{\bu}{\qvect u}
\newcommand{\bpf}{\vect p}
\newcommand{\bqf}{\vect q}
\newcommand{\bp}{\qvect p}
\newcommand{\bq}{\qvect q}
\newcommand{\0}{\vect 0}
\newcommand{\zi}{\vect i}
\newcommand{\zx}{\vect X}
\newcommand{\N}{\vect N}
\newcommand{\n}{\vect n}
\newcommand{\zt}{\vect t}
\newcommand{\br}{\vect r}
\newcommand{\bs}{\vect s}
\newcommand{\e}{\vect e}
\newcommand{\E}{\vect E}
\newcommand{\g}{\vect g}
\newcommand{\bc}{\vect c}
\newcommand{\za}{\vect a}
\newcommand{\zb}{\vect b}
\newcommand{\dd}{\vect d}
\newcommand{\y}{\vect y}
\newcommand{\z}{\vect z}
\newcommand{\veps}{\varepsilon}
\newcommand{\bl}{\vect \lambda}
\newcommand{\et}{\vect \eta}
\newcommand{\tu}{\vect \tau}
\newcommand{\bh}{\vect \zeta}
\newcommand{\bx}{\vect \xi}
\newcommand{\bm}{\vect \mu}
\newcommand{\B}{\mathbf{B}}
\newcommand{\M}{\mathbf{M}}
\newcommand{\K}{\mathbf{K}}
\newcommand{\G}{\mathbf{G}}
\newcommand{\zP}{\mathbf{P}} 
\newcommand{\R}{\mathbf{R}}
\newcommand{\zF}{\mathbf{F}}
\newcommand{\zC}{\mathbf{C}}
\newcommand{\zE}{\mathbf{E}}
\newcommand{\zL}{\mathbf{L}}
\begin{document}
\maketitle

\begin{abstract}

In nature, limbless locomotion is adopted by a wide range of organisms at various length scales. Interestingly, undulatory, crawling and inching/looping gait constitutes a fundamental class of limbless locomotion and is often observed in many species such as caterpillars, earthworms, leeches, larvae, and \blue{\emph{C. elegans}}, to name a few. In this work, we developed a computationally efficient 3D Finite Element (FE) based unified framework for the locomotion of limbless organisms on soft substrates. \blue{Muscle} activity is simulated with a multiplicative decomposition of deformation gradient, which allows mimicking a broad range of locomotion patterns in 3D solids on \blue{frictional}  substrates. In particular, a two-field FE formulation \blue{based on positions and velocities} is proposed. \blue{Governing} partial differential equations are transformed into equivalent time-continuous differential-algebraic equations (DAEs). Next, the optimal locomotion strategies are studied in the framework of optimal control theory. \blue{We resort to} adjoint-based methods and deduce the first-order optimality conditions, that yield a system of DAEs with two-point end conditions. Hidden symplectic structure and Symplectic Euler time integration of optimality conditions have been discussed. The resulting discrete first-order optimality conditions form a \blue{non-linear programming} problem that  is solved efficiently with the Forward Backwards Sweep Method. Finally, some numerical examples are provided to demonstrate the comprehensiveness of the proposed computational framework and investigate the energy-efficient optimal limbless locomotion strategy out of distinct locomotion patterns adopted by limbless organisms.
\end{abstract} \hspace{10pt}
\keywords{Non-linear mechanics, Growth model, Finite element, Limbless locomotion, Optimal control, Adjoint method, Non-linear programming.}


\section{Introduction}\label{s:Intro}

\blue{In this paper we present a formulation for computing the optimal evolution of contractility profiles in elastic bodies that are in contact with rigid substrates, subjected to a velocity dependent frictional law. The problem is numerically solved and applied to the locomotion of slender bodies. In this section we review the approaches, comment our contributions, and explain our choices in relation to: i) soft body locomotion, ii) optimal control, and iii) its numerical solution.}

Locomotion \blue{of limbless deformable body organisms} and their adaptability to changing environments has inspired the design and development of complex soft robots, bio-medical research and many more \cite{shepherdm11}. \blue{For instance,} soft body robots can be employed the surveillance of human colon ulcer \cite{wuwireless22} and targeted drug delivery on desired sites \cite{xumicro22}. Limbless organisms perform locomotion in their lifetime to fulfil vital functions such as food search, reproduction \blue{or as a defence strategy}. The energy efficiency of specific locomotion patterns and underlying control mechanism is not fully understood and is still an open question \blue{for} the scientific community. To address these questions, researchers often \blue{resort to} principles of optimal control theory \blue{\cite{sharp21,lenhart07}. However}, soft-body limbless locomotion exhibits \blue{large deformations}, negligible inertia and interface anisotropy\blue{, which make} the depiction of the control laws notoriously hard.  

Limbless organisms propel in a medium by the periodic deformation of their body shape \blue{ through coordinated and  systematic  contraction and extension of their} ventral-dorsal skeletal muscle system. Although \blue{these contractions are necessary}  for locomotion, environmental frictional conditions are equally important. In flat and isotropic frictional substrates, limbless locomotion results in no net displacement of body centroid, and interface anisotropy emerged as necessary conditions for locomotion \cite{gray64, munoz22}. Nature provides \blue{efficient strategies} to break frictional isotropy and furnish out undulatory, crawling and inching gait patterns. Typical illustrations \blue{of these patterns are \emph{Caenorhabditis elegans} (\emph{C. elegans})}, larvae, and caterpillars, respectively. For instant, \blue{\emph{C. elegans}} breaks the frictional isotropy over agar substrate through a thin film hydrodynamic lubrication and introduce higher drag forces along body normal \cite{rabets14,shen12}. Instead larvae have a segmented body structure while stiff bristles/setae are present on the earthworm body which helps in inducing frictional anisotropy along the forward and reverse direction of advancement \cite{fang15compr}. On the contrary, caterpillar breaks frictional isotropy with the popular anchoring mechanism \cite{xu22loco}. These worms have a unique ability to modulate muscular wave characteristics with the sensitivity of substrate friction. For instant \blue{\emph{C. elegans}} in a small frictional environment exhibits a swimming gait with a low frequency of undulation whereas, in high frictional interfaces, they show crawling gain with a high frequency of undulation \cite{karrakchouOP06, fangb10}. The identification of the regulatory mechanism behind this adaptability is not completely understood. It becomes thus interesting to study the optimal trajectories and compare them with the gait adopted by the limbless organism.

Motivated by the seminal work of Gray \cite{gray64} and Purcell \cite{purcell77}, the \blue{analysis of locomotion modes} has significantly improved the development of soft robots and its success paved new avenues for the computational design. The challenging aspect in the modelling front is the finite deformation behaviour of soft continuum robots under service conditions to ensure the intended functionality. The last two decades witnessed extensive work towards the development of beam and Cosserat rod-based planar and spatial models to address these computational issues \cite{alouges13,bilbao18roll, ranner20, munoz22, bijjm23}. The accomplishment of these multibody system modelling techniques lies in the fact that fewer degrees of freedom are sufficient to predict the global response \cite{hemingway21, wang23mono}. However, the rod-based model represents the soft body cross-sectional geometry in an average sense and often misses out  the underlying bio-mechanics responsible for the bulk response. Additionally, these approaches include inertial effects to model the locomotion of limbless organisms such as larvae, inchworms, and earthworms. 
Unfortunately, the presence of small inertia often introduces material waves \blue{that} interfere with the muscular travelling waves and produce unacceptable non-physical state trajectories. On the contrary, neglecting inertial effects leads to a non-canonical form of the equilibrium equations which {demands special care for not losing} the underlying geometric structure of the solution.

\blue{In this work, while neglecting inertial effects, we preserve the symplectic structure of the equations, even in the presence of the mentioned velocity dependent non-isotropic frictional conditions. The problem is formulated in the context of finite elasticity, where time dependent growth is included by decomposing the deformation gradient into an elastic and a growth component \cite{rodriguez94}. We propose a two-field formulation based on positions and velocities, and resort to Finite Element (FE) space discretisation of the momentum balance Partial Differential Equations (PDE), which result in a set of index-1 time-continuous non-linear Differential-Algebraic Equations (DAE) \cite{ascher98}. The latter are included as a constraint to our optimisation problem, which minimises a functional that measures the distance of the body centre of mass with respect to a target position.}

\blue{The application of optimal control problems (OCP) arises for instance} in trajectory planning, cancer research, cardiovascular modelling, or epidemiology \cite{knopoff13, millerh15, karrakchou06}. \blue{The numerical solution of  OCP admits two popular approaches: first-discretise then-optimise (DO) or first-optimise then-discretise (OD) \cite{betts10, bryson75}.} The optimality conditions of the latter approach constitute Euler-Lagrange  \blue{equations} with a two-point boundary condition and the so-called Hamiltonian Boundary Value Problem (HBVP). \blue{Its time discretisation turns the HBVP into} the so-called non-linear programming problem (NLP) \cite{betsch17, bijjm23}.  \blue{Problems subjected to PDE constraints are often more difficult to solve} \cite{hinze09, troltzsch10}. We opt to a space discretisation of the PDE stemming from our elasticity problem with velocitiy dependent friction. 

So far there are quite a few articles addressing the OD approach in the finite elasticity. Optimal control of soft tissue have been thoroughly discussed in \cite{lubkoll14, gunnel16, martinez20} \blue{for elliptical Boundary Value Problem (BVP) and associated discrete OCP in the context of 3D finite elasticity}. In particular, \cite{martinez20} shows the potential application of the OD approach on the computational design of soft actuators, where optimal shapes are obtained with the gradient descent strategy\blue{, but in elastostatics}. For hyperbolic PDEs, the space and time discretisation leads to a large size NLP and recasting the optimality conditions into the standard control Hamiltonian-based framework with OD approach is often difficult due to the natural boundary conditions \cite{troltzsch10}. Similarly, the DO approach results in a NLP with the annihilation of associated geometric structures which could be utilised in designing structure-preserving integrators \cite{flasskamp19, betsch17, bijjm23}.

Various time-discretisation schemes have been explored for the numerically solving HBVP \blue{that preserve the underlying geometric structure} \cite{hairer02}. Classical Implicit Euler, Mid-Point  and Energy-Momentum schemes do not preserve \blue{symplecticity} of the solution \cite{flasskamp19, betsch17, bijjm23}. For practical purposes, the numerical stability of discretised OCP is sensitive to the regularisation of control effort and these discretisation strategies are not immune to this. Recently, it has been shown by the authors \blue{that the symplectic Euler (SE) scheme applied to OCP} shows a stable optimal solution trajectory \cite{ABJJM23}. \blue{In this work, we present a general setting for a wider set of integration schemes, which we will particularise to the SE scheme}. 

In practice, the time-step size is often kept small, which results in a large-scale \blue{NLP suffering} from the curse of dimensionality. Employment of the Hessian-based method becomes computationally expensive\blue{, and } deducing the analytic Hessian of optimality conditions is a tedious task, error-prone and usually unavailable. \blue{Alternatively}, populating a numerical Hessian with a finite-difference approximation \blue{destroys in general its sparsity and symmetry, and may lead to an ill-conditioned system. For this reason, we resort to Forward Backward Sweep Method (FBSM) \cite{sharp21, lenhart07}, where the use of Hessian matrix is avoided. In this method, the state and adjoint DAEs are solved separately, and the control variable is updated iteratively.} 

The presence of inequality constraints \blue{for the control variable} introduces discontinuities and additional numerical challenges. \blue{Primal-dual} interior point methods form equivalent smooth problems by introducing a barrier parameter. Unfortunately, the smoothed problem may become ill-conditioned for the small value of the barrier parameter and also suffers from the curse of dimensionality \cite{nocedal06}. Our implementation of \blue{the FBSM for HBVP in conjunction with a gradient-based method also incorporates a projection step that imposes the required inequalities in a satisfactory manner. Some popular choices of } gradient-based methods are Generalised Minimal Residual, Conjugate Gradient (CG), and Gradient Descent (GD). Among them, CG and GD can be extended to  non-linear systems \cite{lasdon67}. A major drawback of the CG method is that it requires the system to be symmetric definite, \blue{a propertie that OCP do not necessarily satisfy, depending on the time-discretisation employed}. \blue{Although GD methods do not suffer from this restriction, they } often take a large number of iterations to locate the extrema. \blue{In our implementation, we accelerate the convergence of GD} with a \blue{specific line-search} scheme based on the Barzikai-Borwein algorithm \cite{barzilai88,burdakov19,sharp21}.


After a comprehensive glimpse at the available literature, it can be concluded that beam mechanics-based planer and spatial models have been explored by researchers to model the specific locomotion of the limbless organism \blue{\cite{hemingway21,alouges13,munoz22}, but not for computing optimal motion. While} optimal control of soft bodies undergoing finite deformations is extensively studied for elliptical PDE constraint only (quasi-static scenario), to the best of our knowledge, a unified finite element-based computational framework for limbless locomotion taking into account muscle active response, muscles orientation, and substrate anisotropy is not available in the literature. In this work, we present a first attempt to develop a consistent computational framework for the forward and associated time-dependent optimal control problem. \blue{In our formulation,  upon introducing a FE discretisation, we obtain} a  DAE constrained optimisation problem that is valid for general three dimensional contractile bodies, with muscle activity modelled through growth patterns that mimic muscle contraction and extension.  \blue{Our approach has two main} advantages: i) it constitutes a general framework for a larger class of limbless locomotion, and ii) the presented framework automatically furnishes the desired time-continuous optimality conditions with the inherent symplectic structure of the solution.

The paper is organised as follows. In Section \ref{s:compu} we describe the governing laws associated with the limbless locomotion on the soft substrates. Here with two-field formulation, the strong form of the PDEs is reduced to the equivalent DAEs. The optimal control problem and optimality conditions are introduced in Section \ref{s:OCP}. Numerical time integration of the Euler-Lagrange equations is described in Section \ref{s:timedisc} followed by the implementation of the optimisation algorithm in Section \ref{s:oalgo}. Central ideas have been validated in Section \ref {s:numexp} by performing numerical experiments on the forward uncontrolled and controlled limbless locomotion on the soft substrates. Finally, the conclusions and future perspectives are drawn in Section \ref{concl}. Information about the sensitivities,  tangent matrices, and \blue{line-search process are given in Appendix \ref{appendix:a} and \ref{appendix:b}.}

\textbf{Notation}: $\mathbb{V}$ is a space of vectors, $\mathit{Lin}$ is space of linear mapping from $\mathbb{V} \times \mathbb{V}$, $\mathit{Sym}$ is a space of symmetric second order tensor, and $\mathbb{SO}(3)$ is the special orthogonal group. $\mathbb{R}_0^{+}$ is a space of all positive real numbers including zero. Suppose, $\za, \zb, \bc, \zx \in \mathbb{V}$, and $\zF, \R, \G \in \mathit{Lin}$, and $\mathbb{B} \in \mathit{Lin} \times \mathit{Lin}$. Then, $\dot{\za}=\frac{d \za}{dt} \bigr\rvert_{\zx} $ is material time-derivative and $\nabla_{\zx}:=\frac{\partial}{\partial \zx}$ is gradient operator defined in the reference configuration. The tensor product between two vector $\za$, $\zb$ is a second order tensor $\G=\za \otimes \zb$ with $G_{ij}=a_i b_j$. \blue{Property} $(\za \otimes \zb)\bc=(\zb \cdot \bc) \za$ is extensively used \blue{throughout} the article. The scalar product between two second-order tensors is defined as $\G:\R=G_{ij} R_{ij}$ with the usual Einstein's summation rule over repeated indices. The tensor product between two second order tensor $\zF$, $\R$ is a fourth-order tensor $\mathbb{B}=\zF \otimes \R$ with $\mathbb{B}_{ijkl}=F_{ij} R_{kl}$. Similarly, $\mathbb{B}=\zF \overline{\otimes} \R$ with $\mathbb{B}_{ijkl}=F_{il} R_{jk}$ and $\mathbb{B}=\zF \underline{\otimes} \R$ with $\mathbb{B}_{ijkl}=F_{ik} R_{jl}$. For instant, fourth rank identity tensor ($\mathbb{I}:=\mathbf{I} \underline{\otimes} \mathbf{I}$) is defined as $\mathbb{I}_{ijkl}=\delta_{ik}\delta_{jl}$, with $\mathbf{I}=\delta_{ij} \e_i \otimes \e_j$ as second rank identity tensor. Space of square integrable functions is defined as $L^2(\Omega_o):=\left\{ x_i \mid \int_{\Omega_o} x_i^2 d\Omega_o < \inf \right\}$ while $H^1(\Omega_o):= \left\{x_i \in L^2(\Omega_o) \mid \frac{\partial x_i}{\partial X_j} \in L^2(\Omega_o) \right\}$ is standard Hilbert or Sobolev space of degree one. $H_o^1(\Omega_o):= \left\lbrace p_i \in H^1(\Omega_o) \mid p_i=0 \text{ on } \Gamma_o^x \right\rbrace $ where $\Gamma_o^x$ is boundary of $\Omega_o$ with prescribed primary field. $W^{1, \infty}$ is a Sobolev space of continuous functions and essentially a Banach space and corresponding product space $W^{1,\infty}(\mathcal{I},\mathbb{R}^n):=W^{1, \infty}(\mathcal{I}) \times W^{1, \infty}(\mathcal{I}) \times \dots \times W^{1, \infty}(\mathcal{I})$ with vector mapping from time-domain $\mathcal{I}$ to $\mathbb{R}^n$.

\section{Computational framework} \label{s:compu}

In this section, we briefly summarise the continuum growth kinematics, governing laws for bulk and interface interactions, the strong form of the conservation laws and the corresponding weak solution procedure. In particular, \blue{we present} a unified two-field formulation \blue{based on positions and velocities} for the motion of solids in the absence of inertial effects.

\subsection{Kinematics of growth}

Consider an undeformed solid at $t=0$ placed in the reference configuration $\Omega_o$, and each (Lagrangian) material point  labelled with a unique position vector $\zx=X_i \E_i $ w.r.t to a fixed coordinate system equipped with a fixed, right-handed, orthonormal bases vector $\E_i \in \mathbb{R}^3$. We assume this configuration is an open, bounded, and connected subset of 3D Euclidean space ($\Omega_o \subseteq \mathbb{R}^3$) with a smooth boundary $\Gamma_o$ with normal $\hat{\N}(\zx)$. At any time $t>0, t \in \mathcal{I} \subset \mathbb{R}_0^{+}$, we assume the body occupies a unique deformed/current configuration $\Omega_t \subseteq \mathbb{R}^3$ with smooth boundary $\Gamma_t$ and material point $\zx$ is mapped to the corresponding spatial position $\xf :=\vect{\varphi}(\zx,t)$\blue{. The deformation tensor is given by $\zF:=\nabla_{\zx}\vect\varphi$, provided the map $\vect\varphi$} is injective and orientation preserving\blue{, i.e. $J:=\text{det}\zF > 0, \forall t\ge 0$} (see Fig. \ref{fig:1}a) \cite{bonet97, gurtin82}. Velocity of the material point with label $\zx$ is the material time derivative of the spatial position $\xf$ i.e., $\bvf(\zx,t):=\frac{\partial \xf(\zx,t)}{\partial t} \bigr\rvert_{\zx} $. \blue{In subsequent derivation, we may remove the dependence of scalars, vectors and tensors on $\vect X$ or $t$ for clarity.}

At each time instant $t \in \mathcal{I}$, the motion is decomposed into stress-free growth (often incompatible) to the virtual intermediate configuration $\overline{\Omega}_t$, followed by a pure elastic deformation which maps $\overline{\Omega}_t$ to the current configuration $\Omega_t$. \blue{The determinants} $J_e:=\det(\zF_e)$ and $J_g:=\det(\zF_g)$ are reversible elastic volume change and irreversible volume change induced by growth, respectively. We assume the undeformed solid undergoes \blue{anisotropic growth} deformation along the preferred direction $\zi(\zx)$ predefined in the reference configuration $\Omega_o$. Furthermore, we introduce a growth-related scalar field/internal variable $\blue{u(\vect X, t)} \in \mathbb{R}$ that physically represents the growth-induced stretching and plays a key role in the system evolution \cite{taber04}. The growth deformation gradient \blue{$\zF_g(\vect X,t)$} and the multiplicative decomposition of the deformation gradient \blue{$\zF(\vect X,t)$} can be expressed as
\begin{align*}
\zF:=& \zF_e \zF_g, \\
\zF_g:=& \mathbf{I}+ u \mathbf{A}, \\
J:= & J_e Jg,
\end{align*}
where $\mathbf{A}:=\zi \otimes \zi \in \mathit{Sym}$ is structural tensor corresponding to the fiber direction $\zi$ in the reference configuration, and \blue{$u$} determines the amount of growth or contraction, such that $\det(\mathbf F_g)=1+u$.

\blue{In subsequent derivations we will resort to the elastic part of the Green-Lagrangian strain tensor ($\zE:=\frac{1}{2}(\zC-\mathbf{I})$), written in terms of the invariant Right Cauchy-Green tensor $\zC:=\zF^\mathsf{T} \zF $, and the growth velocity gradient $\zL_g$, defined by} \cite{rodriguez94}
\begin{align*}
\zL_g:= & \dot{\zF}_g \zF_g^{-1}, \\
\zE_e:= & \frac{1}{2}(\zF_e^\mathsf{T} \zF_e-\mathbf{I}).
\end{align*}

\subsection{Helmholtz fee energy and thermodynamic restrictions}

\blue{Let us introduce} $\psi^e$ and $\psi$ as Helmholtz free energy of the solid per unit volume in intermediate ($\overline{\Omega}$) and reference configuration ($\Omega_o$), respectively. Then, total internal energy ($\mathcal{U}$) associated with the elastic deformation is given by \cite{lamm22}
\begin{align}\label{e:a3}
\begin{aligned}
\mathcal{U} &:=\int \limits_{\overline{\Omega}} \psi^e d\overline{\Omega}=\int \limits_{\Omega_o} \psi d\Omega_o, \\
\psi &:=J_g \psi^e.
\end{aligned}
\end{align}

Growth phenomenon must follow Clausius-Duhem inequality and under isothermal process takes the form \cite{gurtin82}
\begin{align}\label{e:a4}
\mathcal{D}_{int}:=-\dot{\psi}+\zP : \dot{\zF} \geq 0.
\end{align}

For simple elastic solids, the time derivative of the Helmholtz free energy per unit volume in reference configuration is expressed as
\begin{align}\label{e:a5}
\dot{\psi}=J_g \dot{\psi}^e+J_g \psi^e \zF_g^{-T} : \dot{\zF}_g.
\end{align}

Substituting Eq. \eqref{e:a5} in Eq. \eqref{e:a4}, internal dissipation inequality reduces to
\begin{align}\label{e:a6}
\mathcal{D}_{int}:=\left(\zP -J_g \frac{\partial \psi^e}{\partial \zF_e} \zF_g^{-T} \right) : \dot{\zF}+ \left( \zF_g^{T}\frac{\partial \psi^e}{\partial \zF_e}-\psi^e \mathbf{I}\right) : \zL_g \geq 0
\end{align}

As per the standard Noll-Coleman procedure, above inequality must be satisfied for all admissible elastic processes\blue{. Therefore,} the following restrictions on the growth process are deduced,
\begin{align*}
\zP = & J_g \zP_e \zF_g^\mathsf{-T}, \\
\mathcal{D}_{int}= & \R : \zL_g \geq 0,
\end{align*}
where $\zP_e:=\frac{\partial \psi^e}{\partial \zF_e}$ is \blue{the} elastic First Piola-Kirchhoff stress tensor, and tensor $\R:=\zF_g^\mathsf{T}\zP_e-\psi^e \mathbf{I}$ is the driving force for the growth phenomenon (muscles active response).

\begin{rem} \emph{Solids for which internal dissipation vanishes ($\mathcal{D}_{int}=0$) for all admissible deformation processes, and for which stress measure can be derived from a conservative potential are known as hyperelastic or Green elastic solids. In these cases the deformation phenomenon of an elastic solid can be modelled with a hyperelastic constitutive law.}
\end{rem}

In this work, the Neo-Hookean hyperelastic model is chosen and free energy is expressed in terms of the \blue{elastic Green-Lagrangian strain tensor} $\zE_e$ and Lame's \blue{parameters viz. shear and bulk modulus $\mu\in \mathbb{R}^+$ and $\lambda\in \mathbb{R}^+$, respectively}. Then, Helmholtz free energy of the Neo-Hookean solid per unit volume in intermediate configuration is introduced as \cite{wriggers08, bonet97}
\begin{align}\label{e:a9}
\psi^e :=\frac{\lambda}{2} (\ln{J_e})^2-\mu \ln{J_e}+\mu \Tr{(\zE_e)},
\end{align}
where $\Tr{(\zE_e)}=\frac{1}{2} (\blue{\zF_e:\zF_e} -3)$ is first invariant of $\zE_e$.
Using Eq. \eqref{e:a9}, elastic first Piola-Kirchhoff stress tensor $\zP_e$ and referential elastic fourth-order constitutive tensor $\mathbb{A}_e$ are given by
\begin{align}\label{e:a10}
\zP_e &:=\frac{\partial \psi^e}{\partial \zF_e}=\mu \zF_e+(\lambda \ln{J_e}-\mu) \zF_e^\mathsf{-T}, \\
\mathbb{A}_e &:= \frac{\partial \zP_e}{\partial \zF_e} = \mu \mathbb{I} -(\lambda \ln{J_e}-\mu) (\zF_e^\mathsf{-T} \overline{\otimes}\zF_e^{-1} )+\lambda \zF_e^\mathsf{-T} \otimes \zF_e^\mathsf{-T}.
\end{align}
In component form, referential elastic fourth-order constitutive tensor reduces to
\begin{align}\label{e:a11}
(\mathbb{A}_e)_{ijkl} := \frac{\partial (\zP_e)_{ij}}{\partial (\zF_e)_{kl}} = \mu \delta_{ik} \delta_{jl}-(\lambda \ln{J_e}-\mu) (\zF_e^{-1})_{li} (\zF_e^{-1})_{jk} +\lambda (\zF_e^{-1})_{ji} (\zF_e^{-1})_{lk} .
\end{align}

\begin{figure}[!htb]
    \centering
   \includegraphics[width=0.5\textwidth]{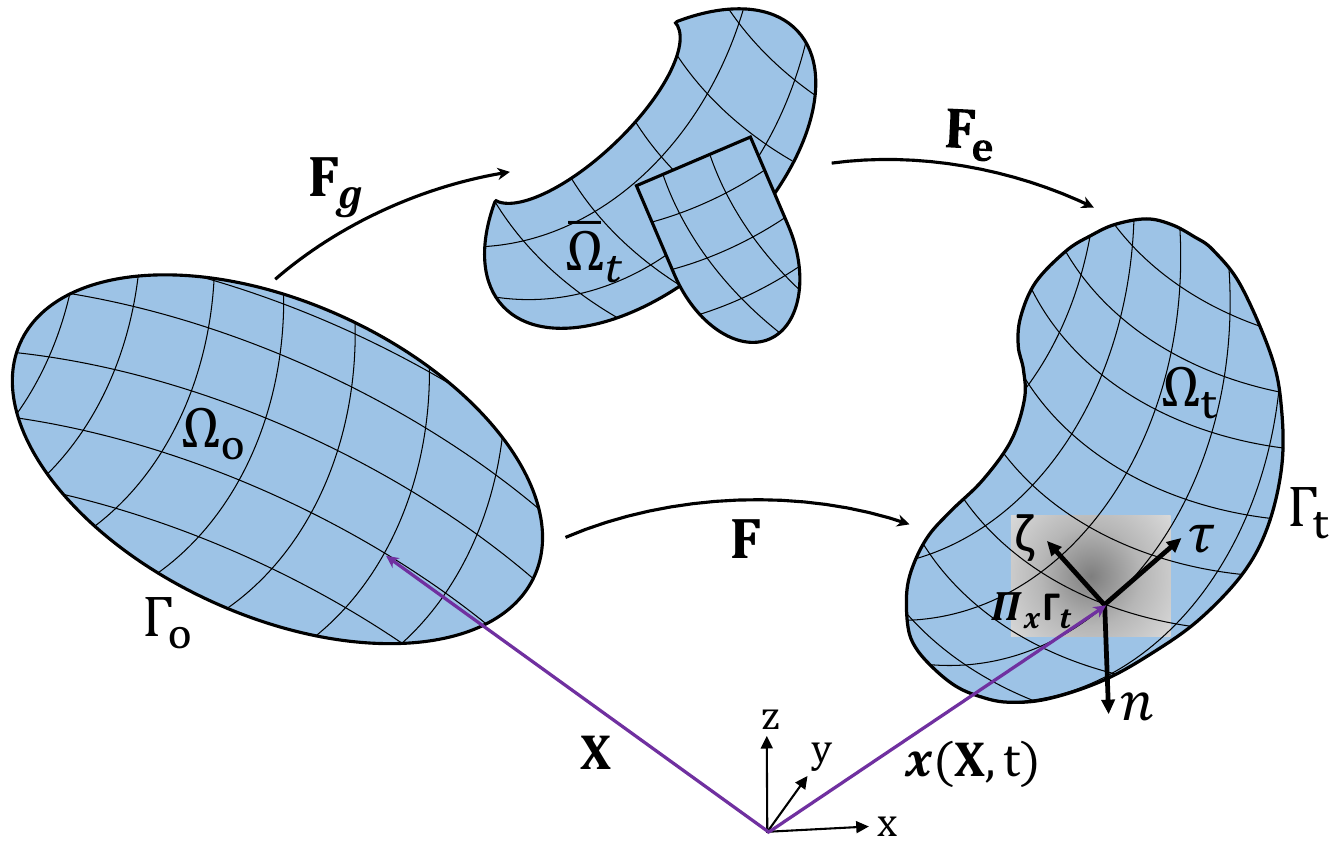} \hspace{3ex}
   \includegraphics[width=0.45\textwidth]{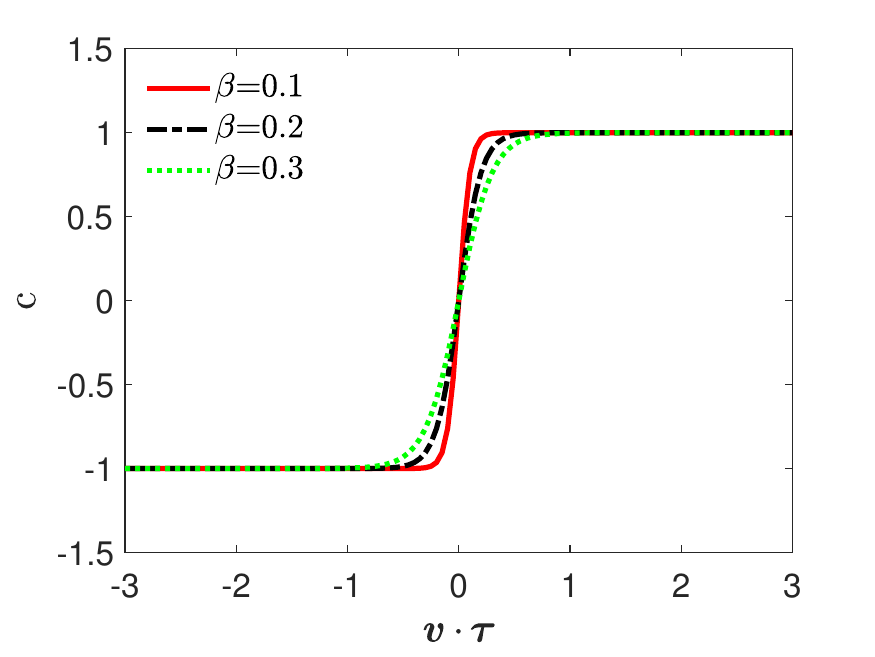}
    \caption{(a) Continuum 3D growth model, and (b) smoothed hyperbolic-tangent curve.}
   \label{fig:1}
\end{figure}

\begin{prop}\label{p:HP}
\emph{For prescribed finite growth, $u \in L^2(\Omega_o)$, total free energy $\psi$ is polyconvex and coercive for the compressible Neo-Hookean solid \blue{as defined in Eq. \eqref{e:a9}}.}
\end{prop}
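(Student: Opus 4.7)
The plan is to exploit the multiplicative decomposition $\zF=\zF_e\zF_g$ and transfer the classical polyconvexity and coercivity of the compressible Neo-Hookean density $\psi^e$ to $\psi$ viewed as a function of the total deformation gradient. The key observation is that under prescribed finite growth, $\zF_g(\vect X)=\mathbf{I}+u(\vect X)\mathbf{A}$ and $J_g=1+u$ depend only on $\vect X$, so the map $\zF\mapsto\zF_e=\zF\,\zF_g^{-1}$ is pointwise linear in $\zF$. This reduces the proposition to the well-known analysis of $\psi^e$ together with the behaviour of the minors of $\zF$ under composition with a fixed invertible linear map.

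For polyconvexity I would first split Eq. \eqref{e:a9} as
\[
\psi^e(\zF_e)=\tfrac{\mu}{2}\bigl(\zF_e:\zF_e-3\bigr)+\Gamma(J_e),\qquad \Gamma(J_e):=\tfrac{\lambda}{2}(\ln J_e)^2-\mu\ln J_e,
\]
checking that the first summand is a convex quadratic in $\zF_e$ and that $\Gamma$ is convex on the admissible range of $J_e$ with $\Gamma(J_e)\to+\infty$ as $J_e\to 0^+$ and as $J_e\to\infty$; this places $\psi^e$ in Ball's polyconvex template as a convex function of $(\zF_e,\mathrm{cof}\,\zF_e,\det\zF_e)$. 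To transport the property to $\psi$ I would invoke the minor identities $\mathrm{cof}(\zF\,\zF_g^{-1})=\mathrm{cof}(\zF)\,\mathrm{cof}(\zF_g)^{-1}$ and $\det(\zF\,\zF_g^{-1})=\det\zF/J_g$, which exhibit $(\zF_e,\mathrm{cof}\,\zF_e,\det\zF_e)$ as an invertible linear transformation of $(\zF,\mathrm{cof}\,\zF,\det\zF)$ with coefficients that depend only on $\vect X$. Composition with a linear map and multiplication by the positive scalar $J_g$ both preserve convexity, so $\psi(\vect X,\zF)=J_g\,\psi^e(\zF\,\zF_g^{-1})$ inherits polyconvexity.

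For coercivity I would combine the pointwise lower bound $\psi^e(\zF_e)\ge c_1\|\zF_e\|^2+c_2(\ln J_e)^2-c_3$ with uniform singular-value bounds on $\zF_g$ and $\zF_g^{-1}$ that follow from interpreting \emph{finite growth} as $-1<u_{\min}\le u\le u_{\max}<\infty$ a.e.\ (using the Sherman--Morrison formula $\zF_g^{-1}=\mathbf{I}-\tfrac{u}{1+u}\mathbf{A}$). These yield $\|\zF_e\|^2\ge C\|\zF\|^2$ and $\ln J_e=\ln\det\zF-\ln J_g$ pointwise, whence a bound $\psi(\vect X,\zF)\ge \alpha\|\zF\|^2+\beta(\ln\det\zF)^2-\gamma$ with positive constants uniform over $\Omega_o$. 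The main obstacle I expect is the precise reading of the growth hypothesis: $u\in L^2(\Omega_o)$ alone does not prevent $J_g$ from vanishing or diverging on small sets, whereas both the transfer and the coercivity arguments need uniform essential bounds keeping $J_g$ strictly between $0$ and $+\infty$; I would make this interpretation explicit. A secondary subtlety is that $\Gamma$ is strictly convex only on the subinterval $J_e\le e^{1+\mu/\lambda}$, which can be repaired by the standard convex-envelope device used throughout the Neo-Hookean polyconvexity literature.
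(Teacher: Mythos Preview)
Your proposal is correct and follows essentially the same strategy as the paper: reduce to the known polyconvexity and coercivity of the compressible Neo-Hookean density $\psi^e$, and then transfer these properties through the multiplicative split $\zF=\zF_e\zF_g$. The paper's proof is terser: it views $\psi$ as a function of $\zF_e$ alone (since $u$ is prescribed), invokes Lemma~6.5 of Neff~\cite{neff03} for polyconvexity in the multiplicative-decomposition setting, and writes the coercivity bound directly in the elastic variables $(\zF_e,J_e)$ after multiplying the standard Neo-Hookean estimate by $J_g$. You instead spell out the minor identities that make the change of variables $\zF\mapsto\zF\,\zF_g^{-1}$ linear on $(\zF,\mathrm{cof}\,\zF,\det\zF)$ and state both polyconvexity and coercivity in terms of the total gradient $\zF$; this is the form actually needed for Ball's existence theorem, which the paper cites as the payoff, so your version is arguably the more useful one. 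Your observation that $u\in L^2(\Omega_o)$ alone does not guarantee the uniform bounds on $J_g$ and $\zF_g^{-1}$ required for either step is well taken; the paper handles this only implicitly, by declaring $J_g>0$ a ``physical condition'' and treating $u$ as bounded in practice. Your remark on the convexity of $\Gamma$ is an extra refinement the paper does not address.
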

\begin{proof}
Suppose at time $t \in \mathcal{I}$, growth field is given and bounded i.e., $u \in L^2(\Omega_o)$ with $J_g=1+u$ (see Appendix \ref{appendix:a4}). Then, total free energy can be solely expressed as a function of elastic invariants
\begin{align*}
\psi(u,\zF_e):=J_g(u) \psi^e(\zF_e) = \psi(\zF_e)
\end{align*}

With the above reduction and following lemma 6.5 of \cite{neff03}, polyconvexity of $\psi(\zF_e)$ can be established. The coercivity of $\psi(\zF_e)$ follows from the coercivity of elastic free energy of compressible Neo-Hookean solid \blue{that has a free energy given by $\psi^e(\zF_e)$ in Eq. \eqref{e:a9}}\cite{martinez20, schroder03} and requires $J_g(u) > 0$\blue{. Then}
\begin{align*}
\psi^e(\zF_e)& \geq c_1 \left(\zF_e:\zF_e+J_e^2 \right) + c_2, \\
\psi(\zF_e)& \geq c_1 J_g(u) \left(\zF_e:\zF_e+J_e^2 \right) + c_2 J_g(u),
\end{align*}
where $c_1 \in \mathbb{R}^+$ and $c_2 \in \mathbb{R}_0^+$.

$J_g(u) \in \mathbb{R}^+ $ represents the physical condition and rules out self-penetration under muscle active contraction. For a given bounded growth field, a polyconvex and coercive free energy function is a sufficient condition to ensure the existence of at least one minimiser of the energy functional $\mathcal{G}$ (see Section \ref{s:compu2field}) \cite{ball76}.

\end{proof}


\subsection{Substrate traction field}\label{s:compusub}

\blue{Let us denote by $\mathit{\Pi} \Gamma_t$ denotes the tangent bundle of the surface $\Gamma_t$ at time $t \in \mathcal{I}$}.
At each $\xf(\zx,t) \in \Gamma_t$, we introduce a set of orthonormal vectors $\{ \n, \tu, \bh \}$ defining the contact geometry. Vector $\n$ defines the substrate normal and the tangent space at $\xf$, $\mathit{\Pi}_{\xf}\Gamma_t$ is spanned by the vector $\tu$ (tangential) and $\bh$ (lateral) (See Fig. \ref{fig: 1t}). At $\xf(\zx,t)$, $\bvf_n$ and $\bvf_{\pi}$ are the projected velocity along the surface normal $\n$ and tangent plane $\mathit{\Pi}_{\xf}\Gamma_t$, respectively. We introduce a projection tensor field $\mathbf{\Upsilon}_n:=\mathbf{I}-\n \otimes \n$ such that $\mathbf{\Upsilon}_{\n} \bvf \in \mathit{\Pi} \Gamma_t$. \text{Formally,}
\begin{align}\label{e:a12}
\begin{aligned}
\bvf_n= & (\bvf \cdot \n) \n=\left( \n \otimes \n \right) \bvf, \\
\bvf_{\pi}= & \bvf- \bvf_n =\mathbf{\Upsilon}_{\n}\bvf.
\end{aligned}
\end{align}

Next, $\bvf_{\pi}$ \blue{is further projected along tangential and lateral directions of the plane} $\mathit{\Pi}_{\xf}\Gamma_t$, \blue{giving rise to vector components $\bvf_t$ and $\bvf_l$}. In this study, the substrate is kept stationary and flat. Projected velocities represent the relative velocity of sliding\blue{, and after defining the projection tensor $\mathbf{\Upsilon}_{\tu}=\mathbf I-\tu\otimes\tu$, they} are given by
\begin{align}\label{e:a13}
\begin{aligned}
\bvf_t= & (\bvf_{\pi} \cdot \tu) \tu=\left( \tu \otimes \tu \right) \bvf_{\pi}\blue{=(\tu\otimes\tu)\mathbf \Upsilon_{\n}\bvf}, \\
\bvf_l= & \bvf_{\pi}- \bvf_t =\mathbf{\Upsilon}_{\tu} \bvf_{\pi}\blue{=\mathbf\Upsilon_{\tu}\mathbf \Upsilon_{\n}\bvf}. 
\end{aligned}
\end{align}

\begin{figure}[H]
    \centering
\includegraphics[width=8cm]{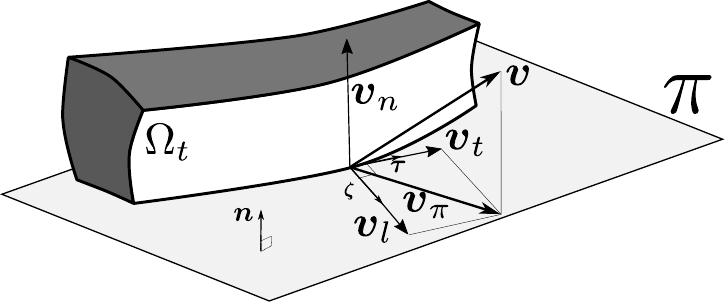}
 \caption{Projection of relative velocity field along substrate normal $\n$ and tangent plane $\mathit{\Pi}_{\xf}\Gamma_t$ for a limbless slender organism on the frictional substrate.}
   \label{fig: 1t}
\end{figure}

Motivated by the standard beam-based model for limbless locomotion \cite{gray64, munoz22, bijjm23}, we assume frictional anisotropy along the tangential ($\tu$) and lateral direction ($\bh$). Frictional forces induced at the body-substrate interface have been modelled in \blue{the} reference configuration with the following contact law
\begin{align}\label{e:a14}
\zt_o:=-\mu_t \bvf_t-\mu_l \bvf_l,
\end{align}
where, $\mu_t$ and $\mu_l$ are the coefficients of friction along tangential and lateral directions, respectively. Substituting Eq. \eqref{e:a12} and Eq. \eqref{e:a13} in Eq. \eqref{e:a14}, traction vector can be expressed as
\begin{align}\label{e:a15}
\zt_o=-\mathbf{B}(\tu,\n) \bvf
\end{align}
where frictional tensor $\mathbf{B}$ is defined as (recall that $\n$ and $\tu$ are orthonormal)
\begin{align}\label{e:a16}
\mathbf{B}(\tu,\n):= \left( \mu_l \mathbf{I}+(\mu_t-\mu_l) (\mathbf{I}-\mathbf{\Upsilon}_{\tu}) \right) \mathbf{\Upsilon}_n = \mu_l (\mathbf{I}-\n \otimes \n) + (\mu_t-\mu_l) (\tu \otimes \tu).
\end{align}

%

Many limbless organisms exert frictional anisotropy along the forward and reverse tangent direction. Considering this, we generalise frictional tensor $\mathbf{B}$ and assume \blue{also anisotropic frictional conditions with respect to forward and backward directions along} tangent vector $\tu$
\begin{align}\label{e:a18}
\mu_t = \frac{\mu_f+\mu_b}{2} +\left(\frac{\mu_f -\mu_b} {2} \right) c
\end{align}
where, $c= \text{sign}(\bvf \cdot \tu) \in \left\lbrace-1,1 \right\rbrace $ is a step function, $\mu_f$ and $\mu_b$ are the coefficient of friction along forward (tangent $\tu$) and reverse direction (opposite to $\tu$), respectively. In case, we have isotropy in the forward and reverse direction, then we impose $\mu_f=\mu_b$.

The above anisotropic friction law needs detection of the sign of velocity and hence introduces discontinuities in the dynamic simulation. As per standard procedure, following regularised/smoothed hyperbolic-tangent form has been used (see Fig. \ref{fig:1}b)
\begin{align}\label{e:a19}
c:=\tanh{\left(\frac{\bvf \cdot \tu}{\beta} \right)}
\end{align}
where $\beta \approx 0.1$ is a tolerance parameter of the Tanh-regularisation.

\begin{rem} \emph{Direction of limbless locomotion depends upon the degree of frictional anisotropy and direction of growth wave propagation. For undulatory locomotion with anisotropy along normal ($\mu_n$) and tangent direction (with isotropy along forward and reverse direction, $\mu_t=\mu_f=\mu_b$) limbless organism moves opposite to the direction of growth wave propagation (retrograde locomotion). Interestingly, frictional anisotropy along forward ($\mu_f$) and reverse direction ($\mu_b$) can generate either a prograde or retrograde locomotion i.e, motion along (prograde) or reverse (retrograde) to the direction of growth wave propagation. For instance, if a growth wave is propagating along a forward direction then frictional anisotropy $\mu_f<\mu_b$ generates a prograde locomotion whereas $\mu_f>\mu_b$ generates a retrograde locomotion.}
\end{rem}


\subsection{Viscous dissipation and balance laws}\label{s:compuLaws}

We neglect external body forces (gravity), but instead, we aim to regularise our problem by assigning forces proportional to the velocity at all points, in addition to the surface friction. This is equivalent to applying a body force proportional to velocity, $\zb_0=-\mu_0 \bvf$. In continuum setting, we hypothesise the existence of viscous energy dissipation $\psi^v:=\frac{\mu_o}{2} \bvf \cdot \bvf$ per unit reference volume and corresponding viscous energy/Rayleigh dissipation functional $\mathcal{W}^v:= \int_{\Omega_o} \psi^v d\Omega_o$. Then, non-conservative body force field $\zb_o$ expressed in the reference configuration reads
\begin{align}\label{e:a20}
\zb_o:= -\nabla_{\bvf} \psi^v=-\mu_o \bvf.
\end{align}

With the notion of internal energy functional, and non-conservative Rayleigh dissipation functional (viscous and frictional forces), potential energy functional can be constructed and for equilibrium, potential energy functional should attain its stationary value for all admissible variations in the deformation field (also known as Hu-Washizu Principle). Stationarity of the potential energy functional leads to the conservation of linear momentum and in the absence of inertial forces the local balance of linear momentum in the reference configuration reads \cite{wriggers08}
\begin{align}\label{e:a21}
\nabla_{\zx} \cdot \zP+\zb_o=\0\blue{, \qquad \forall \zx\in\Omega_o.}
\end{align}

%

At time t, we partition surface $\Gamma_o$ into the Dirichlet boundary $\Gamma_o^x$ and Neumann boundary $\Gamma_o^f$ such that $\Gamma^x \cap \Gamma_o^f=\emptyset$, $\Gamma_o^x \cup \Gamma_o^f=\Gamma_o$. Essential and natural boundary conditions are defined as
\begin{align}\label{e:a23}
\begin{aligned}
\xf(\zx, t)=\overline{\x} \qquad &\text{on } \Gamma_o^x, \\
\zP \hat{\N} =\zt_o \qquad &\text{on } \Gamma_o^f,
\end{aligned}
\end{align}
where, \blue{$\overline{\x}$} and $\zt_o$ are prescribed spatial position and traction field, respectively. At $t=0$, the spatial position is known and prescribed as $\xf(\zx, 0)=\x_0$.

Since the traction field is a function of spatial position and velocity, we consider both of them as a primary field and impose the \blue{following additional differential equation}
\begin{align}\label{e:a24}
\bvf -\dot{\xf}= \0, \qquad \text{in } \Omega_o,
\end{align}
along with the initial conditions $\xf(\zx, 0)=\x_0$, and $\bvf(\zx, 0)=\bv_0$.

Eq. \eqref{e:a21} and, Eq. \eqref{e:a24}, \blue{jointly} with Eq. \eqref{e:a23} form a system of non-linear \blue{BVP}. For arbitrary geometry and boundary conditions a close-form solution may not exist. The finite element method has been used to obtain the weak solution of the strong form of the BVP and is discussed in the next section.


\subsection{Spatial discretisation: two-field formulation}\label{s:compu2field}

In the absence of inertia forces, the strong form of the governing BVP forms a system of PDEs \blue{that we recast as}
\begin{align}\label{e:a25}
& \nabla_{\zx} \cdot \zP+\zb_o=\0, && \text{in } \Omega_o \times \mathcal{I} \\ \nonumber
& \bvf - \dot{\xf}=\0, && \text{in } \Omega_o \times \mathcal{I} \\ \nonumber
& \zP \hat{\N} =\zt_o, && \text{on } \Gamma_o^f \times \mathcal{I} \\ \nonumber
& \bc( \xf-\overline{\x},\bvf-\overline{\bv})=\0, && \text{on } \Gamma_o \setminus \Gamma_o^f \times \mathcal{I}
\end{align}

For variational formulation, we assign the function spaces for primary field $\xf, \bvf \in H^1(\Omega_o)$ and their variations $\bpf, \bqf \in H_o^1(\Omega_o)$. Then at any time $t \in \mathcal{I}$, we define energy functional $\mathcal{G} \in \mathbb{R} $ given by
\begin{align*}
\mathcal{G}(t,\xf, \bvf; \bpf, \bqf):= \int_{\Omega_o} \left( \bvf - \dot{\xf}\right) \cdot \bpf d\Omega_o-\int_{\Omega_o} \left(\nabla_{\zx} \cdot \zP +\zb_o \right) \cdot \bqf d\Omega_o.
\end{align*}

\blue{After using relation $\nabla_{\zx} \cdot (\zP^\mathsf{T} \bqf)=\zP : \nabla_{\zx} \bqf+(\nabla_{\zx} \cdot \zP) \cdot \bqf$, applying divergence theorem, and substituting Neumann boundary condition in Eq \eqref{e:a28},} the energy functional can be expressed as
\begin{align}\label{e:a28}
\blue{\mathcal{G}(t,\xf, \bvf; \bpf, \bqf)= \int_{\Omega_o} \zP : \nabla_{\zx} \bqf d\Omega_o - \int_{\Omega_o} \bqf \cdot \zb_o d\Omega_o - \int_{\Gamma_o} \bqf \cdot \zt_o d\Gamma_o + \int_{\Omega_o} \left( \bvf - \dot{\xf}\right) \cdot \bpf d\Omega_o.}
\end{align}

\blue{It then follows, that at any time $t \in \mathcal{I}$, the variational problem reduces to the following weak form: } 
\begin{align}\label{e:a29}
&\blue{Find\ \xf, \bvf \in H^1(\Omega_o)\ such\ that\ \mathcal{G}(t,\xf, \bvf; \bpf, \bqf) =0, \quad \forall \bpf, \bqf \in H_o^1(\Omega_o).}
\end{align}

Now we introduce a semi-discretisation\blue{, i.e., a spacial discretisation of fields $\xf, \bvf, \bpf, \bqf$ and $u$}. \blue{The reference configuration is discretised into \blue{$E$ finite elements $\Omega_o^e$} with $\Omega_o= \bigcup_{e=1}^{E} \Omega_o^e$. We assume that the growth field $u$ is a time-dependent known element-wise constant function}. Spatial positions $\xf$ and velocities $\bvf$ within \blue{$m$-noded element $e$ are respectively interpolated with the nodal values $\prescript{e}{}{\x}^a$ and $\prescript{e}{}{\bv}^a$ ($a=1,2,\dots,m$ with summation over $a$)} \cite{bonet97}
\begin{align}\label{e:a30}
\xf(\zx,t)= N^a (\zx) \prescript{e}{}{\x}^a(t), \quad \bvf(\zx,t)= & N^a (\zx) \prescript{e}{}{\bv}^a(t), \quad u(\zx,t)= \prescript{e}{}{\text{u}}(t)
\end{align}
where \blue{$N^a(\zx)$ are the interpolation functions}. Vectors $\prescript{e}{}{\bv}:=\{\prescript{e}{}{\bv}^1, \dots,\prescript{e}{}{\bv}^a, \dots \prescript{e}{}{\bv}^m \}$ and $\prescript{e}{}{\x}:=\{\prescript{e}{}{\x}^1, \dots,\prescript{e}{}{\x}^a, \dots \prescript{e}{}{\x}^m \}$ are elemental \blue{degrees of freedom}, with $\prescript{e}{}{\bv}^a=\{\prescript{e}{}{\text{v}}^a_x,\prescript{e}{}{\text{v}}^a_y, \prescript{e}{}{\text{v}}^a_z \}$, and $\prescript{e}{}{\x}^a=\{\prescript{e}{}{\text{x}}^a_x,\prescript{e}{}{\text{x}}^a_y, \prescript{e}{}{\text{x}}^a_z \}$. With the standard Galerkin procedure, the \blue{test functions within element $e$ is approximated resorting to the same interpolation}
\begin{align}\label{e:a31}
\bpf(\zx,t)= N^a (\zx) \prescript{e}{}{\bp}^a(t), \quad \bqf(\zx,t)= N^a (\zx) \prescript{e}{}{\bq}^a(t).
\end{align}


\blue{It then follows that the gradient of the test functions are given by $\nabla_{\zx} \bpf= \prescript{e}{}{\bp}^a (t) \otimes \nabla_{\zx} N^a$ and $ \nabla_{\zx} \bqf= \prescript{e}{}{\bq}^a (t) \otimes \nabla_{\zx} N^a$. After inserting the interpolation in \eqref{e:a30}-\eqref{e:a31} into the weak form \eqref{e:a29}, and from the arbitrariness of $\prescript{e}{}\bp^a$ and $\prescript{e}{}\bq^a$, the following system of non-linear Differential-Algebraic Equations  (DAE) are obtained,}
\begin{align}\label{e:a32}
\begin{aligned}
& \bigcup_{e=1}^{E} \prescript{e}{}\M (\prescript{e}{}{\bv}-\prescript{e}{}{\dot{\x}}) = \0\\
& \bigcup_{e=1}^{E}\left( \prescript{e}{}\g_{int}(\prescript{e}{}{\x},\prescript{e}{}{\bv},\prescript{e}{}u)- \prescript{e}{}\g_{ext}(\prescript{e}{}{\x},\prescript{e}{}{\bv},\prescript{e}{}u)\right)=\0 
\end{aligned}
\end{align}
\blue{where the elemental residuals $\prescript{e}{}\g_{int}(\prescript{e}{}{\x},\prescript{e}{}{\bv},\prescript{e}{}u)$ and $\prescript{e}{}\g_{ext}(\prescript{e}{}{\x},\prescript{e}{}{\bv},\prescript{e}{}u)$ associated to node $a$ and elemental mass matrix $\prescript{e}{}\M$ associated to nodes $a$ and $b$ are given by}
\begin{align}\label{e:gintext}
\begin{aligned}
\prescript{e}{}\g^a_{int}(\prescript{e}{}{\x},\prescript{e}{}{\bv},\prescript{e}{}u)&=\int_{\Omega_e} \zP \nabla_{\zx} N^a d\Omega_e
= \int_{\Omega_e} J_g \zP_e \zF_g^{-T} \nabla_{\zx} N^a d\Omega_e,\\
\prescript{e}{}\g^a_{ext}(\prescript{e}{}{\x},\prescript{e}{}{\bv},\prescript{e}{}u)&=\int_{\Omega_e} N^a \zb_o d\Omega_e+ \int_{\Gamma_e} N^a \zt_o d\Gamma_e, \\
 \prescript{e}{}\M^{ab}&=\int_{\Omega_e} N^a N^b d\Omega_e \mathbf I. 
\end{aligned}
\end{align}

\blue{In Eq.  \eqref{e:a32}, symbol  $\bigcup_{e=1}^{E}$ denotes standard FE assembling process of elemental vectors and matrices. By defining the global nodal position, $\x=\{\prescript{1}{}{\x}\ldots \prescript{E}{}{\x}\}$, equivalent global velocity and growth vectors, $\bv$ and $\bu$, respectively, and from the fact that the global mass matrix $\M$ is non-singular, equations in \eqref{e:a32} can be written as a system of index-1 semi-explicit DAE (provided $\nabla_{\bv} \g$ is non-singular at any time $t \in \mathcal{I}$)} \cite{ascher98}
\begin{align}\label{e:a33}
\begin{aligned}
\bv-\dot{\x}=\0 \\
\g(\x,\bv,\bu)=\0
\end{aligned}
\end{align}
along with the \blue{initial and} boundary conditions
\begin{align}\label{e:a34}
\begin{aligned}
&\blue{\x(0)=\x_0, \bv(0)=\bv_0}\\
&\bc( \x-\overline{\x},\bv-\overline{\bv})=\0 \quad \text{on } \Gamma_o \setminus \Gamma_o^f \times \mathcal{I}
\end{aligned}
\end{align}

In here, spatial position $\x$ act as a differential variable while spatial velocity $\bv$ is an algebraic variable.
\blue{In forthcoming sections we will make use of the centroid (or centre of mass) $\x_{cm}$ w.r.t inertial frame, which at any time $t$ can be computed from the FE discretisation as}
\begin{align}\label{e:a37}
\x_{cm}(t)=\frac{\int_{\Omega_o} \xf(\zx,t) d\Omega_o}{\int_{\Omega_o} d\Omega_o}=\frac{1}{\Omega_o} \bigcup_{e=1}^{E} \left(\int_{\Omega_e} N^a d\Omega_e \right) \prescript{e}{}{\x}^a= \mathbf{\Lambda} \x(t)
\end{align}
where $\mathbf{\Lambda}=\frac{1}{\Omega_o} \bigcup_{e=1}^{E} \prescript{e}{}{\Lambda}$, is the global mass distribution matrix and $\prescript{e}{}{\Lambda}^a:=\int_{\Omega_e} N^a d\Omega_e$ is the elemental mass distribution vector.


\section{Optimal control formulation: inverse problem}\label{s:OCP}

In this section, we shall consider a generic infinite-dimensional optimal control problem which seeks optimal control distribution $\bu(t)$ that minimises an objective functional ($\mathcal{J}: \mathbb{V}_{\x} \times \mathbb{V}_{\bv} \times \mathbb{V}_{\bu} \rightarrow \mathbb{R}$) subject to the constraints expressed in the form of a system of semi-explicit index-1 DAEs\blue{, with a similar structure to those in \eqref{e:a33}-\eqref{e:a34}},
\begin{align}\label{e:b1}
\min_{\x,\bv,\bu} & \ \mathcal{J}(\x,\bv,\bu) \\ \nonumber
\text{s.t.}&\ \bv - \dot{\x} =\0 && \text{(State ODE)}\\ \nonumber
&\ \g(\x,\bv, \bu)=\0 && \text{(Algebraic equation)}\\ \nonumber
&\ \bc( \x-\overline{\x},\bv-\overline{\bv})=\0 &&\text{(Boundary condition)}\\
&\blue{\x(0)=\x_0,\bv(0)=\bv_0} &&\blue{\text{(Initial condition)}}
\end{align}

\blue{We seek solutions} $\x \in W^{1,\infty}(\mathcal{I},\mathbb{R}^{n_1})$, $\bv \in L^{\infty}(\mathcal{I},\mathbb{R}^{n_1})$, and $\bu \in L^{\infty}(\mathcal{I},\mathbb{R}^{n_2})$ with $n_1$ and $n_2$, respectively, \blue{the number of state and control degrees of freedom resulting from FE discretisation. For clarity, dependence on time $t$ has been omitted in all these variables.} For the tracking-type optimal control problem under consideration, we aim to minimise the following objective functional
\begin{align}\label{e:b2}
\mathcal{J}(\x, \bu)=  \int^T_0 \left(r(\x)+q(\bu) \right) dt+\phi(\x(T)),
\end{align}
\blue{with $T$ a final (known) time, and with the} following quadratic form of the input $q(\bu)$, output $r(\x)$ and terminal cost $\phi(\x(T))$,
\begin{align}\label{e:b3}
r(\x)&:=\frac{1}{2} (\x_{cm}-\x_d)^\mathsf{T} (\x_{cm}-\x_d), \\ \nonumber
q(\bu)&:=\frac{\alpha}{2} \bu^\mathsf{T} \bu, \\ \nonumber
\phi(\x(T))&:=\frac{1}{2} (\x_{cm}(T)-\x_d)^\mathsf{T}. (\x_{cm}(T)-\x_d).
\end{align}

Vector $\x_{cm}$ is the centroid position defined in Eq. \eqref{e:a37}, $\x_d$ is a desired/target centroid position, $\alpha$ a regularisation parameter which penalises the amount of input control, and $\phi(\x(T))$ is the terminal cost\blue{. We recall that in our examples, and at any time $t$, each component of $\bu$ is an elemental growth.}

Since the \blue{DAE constraints need} to be satisfied at all time instants, we introduce time-varying \blue{Lagranges multipliers ($\bl$, $\bm$, $\bx$)} and define a Lagrangian functional $\mathcal{L}:\mathbb{V}_{\x} \times \mathbb{V}_{\bv} \times \mathbb{V}_{\bu} \times \mathbb{V}_{\bl} \times \mathbb{V}_{\bm} \times \mathbb{V}_{\bx} \rightarrow \mathbb{R} $ associated to Eq. \eqref{e:b1} as
\begin{align}\label{e:b4}
\mathcal{L}(\x,\bv, \bu; \bl, \bm, \bx)= & \int^T_0 \left(r(\x)+q(\bu) \right) dt+ \int^T_0 \left(\bl^\mathsf{T} \g(\x,\bv, \bu) + \bm^\mathsf{T} \left(\bv-\dot{\x} \right)\right) dt+\phi(\x(T))+\bx^\mathsf{T}\bc.
\end{align}

Using integration by parts, Lagrangian functional can be rearranged to
\begin{align}\label{e:b5}
\mathcal{L}(\x,\bv, \bu; \bl, \bm,\bx)= \int^T_0 \left(\mathcal{H}+\x^\mathsf{T} \dot{\bm} \right) dt-\bm(T)^\mathsf{T}\x(T)+\bm(0)^\mathsf{T}\x(0)+\phi(\x(T))+\bx^\mathsf{T}\bc,
\end{align}
\blue{where we have defined the control Hamiltonian as}
\begin{align}\label{e:b6}
\mathcal{H}(\x,\bv, \bu; \bm, \bl)=r(\x)+q(\bu)+\bl^\mathsf{T}\g(\x,\bv, \bu) +\bm^\mathsf{T}\bv.
\end{align}

First-order optimality \blue{conditions} can be obtained by setting the first variation of the Lagrangian w.r.t $\x, \bv, \bm, \bl, \bu$ equal to zero for all admissible field variations. For the vector field $\x$, the explicit form of the first variation of Lagrangian functional (Gateaux derivative) at $\x$ along the test vector field $\dd$ can be written as,
\begin{align}\label{e:b7}
\mathcal{D}_{\x} \mathcal{L} \cdot \dd= \lim_{\veps \to 0} \frac{\mathcal{L}(\x+\veps \dd,\bv, \bu; \bl, \bm,\bx)-\mathcal{L}(\x, \bv, \bu; \bl, \bm,\bx)}{\veps} =\frac{d}{d \veps} \mathcal{L}(\x+\veps \dd,\bv, \bu; \bl, \bm,\bx)\bigr\rvert_{\veps=0} = 0
\end{align}
and similarly for the remaining fields. Then, first-order stationary conditions also known as Karush-Kuhn-Tucker (KKT) conditions, result in the following system of DAEs (Adjoint, state and control equations)
\begin{align}\label{e:b8}
\begin{aligned}
\dot{\bm} &=-\nabla_{\x} \mathcal{H} && \text{(Adjoint equation)}\\ 
\0 &=-\nabla_{\bv} \mathcal{H} && \text{(Adjoint equation)}\\ 
\dot{\x} &=\nabla_{\bm} \mathcal{H} && \text{(State equation)}\\ 
\0 &=\nabla_{\bl} \mathcal{H} && \text{(State equation)}\\ 
\0 &=\nabla_{\bu} \mathcal{H} && \text{(Control equation)}
\end{aligned}
\end{align}
with \blue{initial and} boundary conditions
\begin{align}\label{e:b9}
\begin{aligned}
\blue{\x(0)=\x_0,\ \bv(0)}&\blue{=\bv_0,}\\ 
\bc( \x-\overline{\x},\bv-\overline{\bv})&=\0,\\ 
\bm(T)&=\nabla_{\x(T)} \phi(\x(T)).
\end{aligned}
\end{align}

\blue{Note that the algebraic relation $\nabla_{\bv}\mathcal H=\vect\mu+\nabla_{\bv}\vect g^T\vect\lambda=\0$ imposes implicitly the final condition $\bl(T)=-\left[ \nabla_{\bv}\g(T)\right]^\mathsf{-T} \bm(T)$}. The fact that $\text{ker}(\nabla_{\bv}\g)=\emptyset$ and the presence of regularised viscous/body forces ($\mu_o>0$) ensures this transformation. This condition is important for the well-posed time-discrete problem as shall be seen later in Section \ref{s:timedisc}. \blue{Equally, the relation $\nabla_{\bu}\mathcal H=\alpha\bu+\nabla_{\bu}\vect g^T\bl=\0$ imposes in turn final conditions for $\bu(t)$. The set of optimality conditions in \eqref{e:b8}} along with two-point boundary conditions \blue{in \eqref{e:b9} form so-called} Hamiltonian Boundary Value Problem (HBVP).

If we introduce a phase vector $\z:= \lbrace\x, \bm \rbrace $ and auxiliary vector $\bs:= \lbrace\bv, \bl, \bu \rbrace $, optimality conditions can be transformed into a compact form that reveals its geometric strcuture, 
\begin{align}\label{e:b10}
\dot{\z} &=\textbf{J}\nabla_{\z} \mathcal{H}, \\
\0 &=\nabla_{\bs} \mathcal{H}, \nonumber
\end{align}
where $\mathbf J \in \mathbb{SO}(2n)$ is the canonical symplectic matrix
\[
\mathbf{J}=\left[\begin{array}{rr}\0 & \mathbf I \\ -\mathbf I & \0\end{array}\right],
\]
\blue{which satisfies $\mathbf{J}^{-1}=\mathbf{J}^\mathsf{T}$ and $\vect a^\mathsf{T} \mathbf{J} \vect a =0, \forall \vect a\in\mathbb R^{2n}$ .} The above form reveals the hidden symplectic structure of the solution, even in the presence of a dissipative or forced system.

\begin{prop}\label{p:HPC}
For an autonomous dynamical system (no explicit time dependency), control Hamiltonian $\mathcal{H}$ is the first integral of the motion.
\end{prop}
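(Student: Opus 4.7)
The plan is to verify that $\mathcal{H}$ is constant along any solution of the first-order optimality system by computing its total time derivative and exploiting the Euler–Lagrange equations in \eqref{e:b8}.

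First, I would note that "autonomous" here means the Hamiltonian has no explicit $t$-dependence, so $\partial\mathcal{H}/\partial t = 0$. This requires that the running cost, the terminal cost, and the residual $\g$ (i.e.\ the constitutive law, the frictional tensor $\B$, and the growth kinematics) do not depend on $t$ except through the variables $(\x, \bv, \bu, \bl, \bm)$. Under this assumption the total derivative along any admissible trajectory reads
\begin{align*}
\frac{d\mathcal{H}}{dt} = \nabla_{\x}\mathcal{H}\cdot\dot{\x} + \nabla_{\bv}\mathcal{H}\cdot\dot{\bv} + \nabla_{\bu}\mathcal{H}\cdot\dot{\bu} + \nabla_{\bm}\mathcal{H}\cdot\dot{\bm} + \nabla_{\bl}\mathcal{H}\cdot\dot{\bl}.
\end{align*}

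Next, I would substitute the optimality conditions of Eq.~\eqref{e:b8}. The two Hamilton-type equations give $\nabla_{\x}\mathcal{H} = -\dot{\bm}$ and $\nabla_{\bm}\mathcal{H} = \dot{\x}$, so the first and fourth terms cancel pairwise as $-\dot{\bm}\cdot\dot{\x} + \dot{\x}\cdot\dot{\bm} = 0$. The three algebraic stationarity conditions $\nabla_{\bv}\mathcal{H}=\0$, $\nabla_{\bl}\mathcal{H}=\0$, and $\nabla_{\bu}\mathcal{H}=\0$ annihilate the remaining three terms, regardless of the values of $\dot{\bv}$, $\dot{\bl}$, $\dot{\bu}$. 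Hence $d\mathcal{H}/dt = 0$ along optimal trajectories, so $\mathcal{H}$ is conserved.

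The main delicate point, and what I would be most careful about, is that the system is a DAE rather than an ODE: the algebraic variables $\bv$, $\bl$, $\bu$ do not come with explicit evolution equations, and in principle $\dot{\bv}$, $\dot{\bl}$, $\dot{\bu}$ could even fail to exist in a classical sense. The argument goes through because the coefficients multiplying these derivatives vanish identically by the algebraic KKT equations, so one needs only mild regularity (say, $\bv, \bl, \bu$ absolutely continuous, which is guaranteed by the index-1 assumption $\ker(\nabla_{\bv}\g)=\emptyset$ together with the smoothness of $\g$ and the Neo-Hookean stored energy). I would state this regularity hypothesis explicitly and then the cancellation argument above yields the result. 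Finally, one may rephrase the conclusion in the compact symplectic form \eqref{e:b10}: since $\nabla_{\bs}\mathcal{H}=\0$ on solutions and $\dot{\z}=\mathbf{J}\nabla_{\z}\mathcal{H}$, the antisymmetry $\vect a^\mathsf{T}\mathbf{J}\vect a = 0$ immediately gives $\nabla_{\z}\mathcal{H}\cdot\dot{\z} = \nabla_{\z}\mathcal{H}\cdot\mathbf{J}\nabla_{\z}\mathcal{H} = 0$, offering a coordinate-free restatement of the conservation law.
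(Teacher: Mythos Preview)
Your proof is correct and follows essentially the same route as the paper: you compute $d\mathcal{H}/dt$ along solutions, invoke the Hamilton-type equations for $(\x,\bm)$ and the algebraic stationarity conditions for $(\bv,\bl,\bu)$, and conclude that all terms vanish. The paper's proof is simply the compact symplectic restatement you give at the end, using $\dot{\z}=\mathbf{J}\nabla_{\z}\mathcal{H}$ and $\nabla_{\bs}\mathcal{H}=\0$ directly; your expanded version and the added regularity remarks for the algebraic variables are welcome but not strictly needed beyond what the paper assumes.
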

\begin{proof}
For an autonomous dynamical system, we have that $\frac{\partial\mathcal H}{\partial t}=0$, and therefore the preservation of the total time derivative of the control Hamiltonian $\mathcal{H}\left(\z,\bs\right)$ follows directly from the Euler-Lagrange equations in Eq. \eqref{e:b10},
\begin{align*}
\dot{ \mathcal{H}}=\nabla_{\z} \mathcal{H}^\mathsf{T} \dot{\z}+\nabla_{\bs} \mathcal{H}^\mathsf{T} \dot{\bs} =\dot{\z}^T \mathbf{J} \dot{\z}+\0^\mathsf{T} \dot{\bs}=0.
\end{align*}
\end{proof}

\blue{Inspired by the conservation of control Hamiltonian or its symplecticity, we will resort to the Symplectic Euler (SE) integration scheme of the HBVP, as it will be explained in the next section}. Additionally, it has been shown \cite{ABJJM23} that SE results in stable trajectories even with the diminishing control regularisation parameter ($\alpha \rightarrow 0$) unlike mid-point or implicit Euler integrators which may introduce numerical oscillations.

In practice, the physical system has limited energy resources and control variables are often restricted to an admissible set i.e., $\bu \in \mathcal{U}_{ad}, \, \mathcal{U}_{ad}:=[\mathcal{U}_{min},\mathcal{U}_{max}]^{\blue{E}}$. In such cases, optimality condition w.r.t control variable results in a variational inequality without affecting state and adjoint equations. For a given $\bu \in \mathcal{U}_{ad}$, state and adjoint equations can be solved, and the objective functional $\mathcal{J}(\x,\bv,\bu)$ can be solely expressed as a functional of the control, i.e., $\hat{\mathcal{J}}(\bu):=\mathcal{J}(\x(\bu),\bv(\bu),\bu)$. Suppose, $\bu \in \mathcal{U}_{ad}$ is an optimal control. \blue{Then we must ensure that for a given a search direction $\dd$ and $\veps>0$} \cite{hinze09}
\begin{align}\label{e:b11}
\delta \hat{\mathcal{J}}(\bu)=\hat{\mathcal{J}}(\bu+\veps \dd)-\hat{\mathcal{J}}(\bu) \geq 0 \Rightarrow \lim_{\veps \to 0^+} \frac{\hat{\mathcal{J}}(\bu+\veps \dd)-\hat{\mathcal{J}}(\bu)}{\veps} =\int^T_0 \nabla_{\bu} \mathcal{H}^\mathsf{T} \dd \, dt \geq 0.
\end{align}

Above variational inequality is equivalent to the following conditions, for a given direction ${}^e\dd=\{0,\ldots,0,{}^ed,0\ldots,0\}$,
\begin{align*} 
\nabla_{\bu} \mathcal{H}^\mathsf{T} {}^e\dd \left\{\begin{array}{ll}
=0, & \text{if } \, \mathcal{U}_{min} \blue{<} {}^eu {<} \mathcal{U}_{max}\\
> 0, & \text{then } \, {}^eu=\mathcal{U}_{min} \\
< 0 , & \text{then } \, {}^eu=\mathcal{U}_{max}
\end{array}\right\},\blue{\ \forall {}^eu \in \mathcal{U}_{ad}}
\end{align*}

These conditions can be encoded into the following min-max projection operation to obtain the optimal growth distribution ($\bu \in \mathcal{U}_{ad}$)
\begin{align}\label{e:b13}
\bu^* &=\underset{\bu_c}{\mathrm{argmin} } \, \mathcal{H} \left( \x(\bu_c),\bv(\bu_c) ,\bm(\bu_c),\bl(\bu_c),\bu_c \right) \\
\bu &=\min \left( \mathcal{U}_\text{max},\max(\mathcal{U}_\text{min},\bu^*) \right)
\end{align}

\begin{rem} \emph{The Optimality conditions Eq. \eqref{e:b8}, can be written in an alternate form (provided $\nabla_{\bv} \g$ is invertible)}
\begin{align*}
& \dot{\x} = \bv && \text{(State equation)}\\
& \0 = \g (\x,\bv,\bu) && \text{(State equation)}\\
& \dot{\bm} =-\nabla_{\x} r(\x)+ \nabla_{\x}\g^\mathsf{T} \nabla_{\bv} \g^\mathsf{-T} \bm && \text{(Adjoint equation)}\\
& \0 = \nabla_{\bu} q(\bu) - \nabla_{\bu}\g^\mathsf{T} \nabla_{\bv}\g^\mathsf{-T} \bm && \text{(Control equation)}
\end{align*}
{\normalfont{with boundary conditions}}
\begin{align*}
\bc( \x-\overline{\x},\bv-\overline{\bv})=\0 \text{, and } \bm(T)=\nabla_{\x(T)} \phi(\x(T)).
\end{align*}
{\normalfont{If the following conditions are satisfied}}
\begin{enumerate}
\item $\nabla_{\x} \g$ and $\nabla_{\bv} \g$ are continuous and invertible mapping $\forall t \in \mathcal{I}$, and

\item $\alpha \in \mathbb{R}^+$ and $q(\bu)$ is quadratic in $\bu$,
\end{enumerate}
\emph{then, the above system is solvable for some $\bu \in \mathcal{U}_{ad}$ and the optimal control problem Eq. \eqref{e:b1}, admits at least one optimal tuplet $\left\lbrace \x^*, \bv^*, \bm^*, \bu^* \right\rbrace $.}
\end{rem}


\section{Time discretisation}\label{s:timedisc}

Time domain $\mathcal{I}=[0,T] \subset \mathbb{R}_0^{+}, T>0$, is uniformly partitioned into $N$ segments with a step size $\Delta t > 0$ such that $t_n=t_{n-1}+\Delta t$, $t_0=0$ and $t_N=T$. With generalised $\tau$-scheme, $\forall \, t_{n-\tau} \in [t_{n-1}, t_n]$ can be parametrised as $t_{n-\tau}:=\blue{(\tau-1) t_{n-1}+\tau t_n}$, $\tau \in [ 0,1 ]$. Then, the state variable $\x$ at time $t_{n-\tau_x}$ ($\tau_x \in [ 0,1 ]$) are evaluated as
\begin{align}\label{e:c1}
\x_{n-\tau_x} &=\blue{(1-\tau_x )\x_{n-1}+\tau_x \x_{n}}
\end{align}
and similarly for the remaining decision variables ($\bv, \bu, \bl, \bm$). Furthermore, state tangent matrices ($\K:=\nabla_{\x} \g$, and $\G:=\nabla_{\bv} \g$) and control tangent matrix ($\B:=\nabla_{\bu} \g$) at intermediate time point ($t_{n-\tau}$) and final time point ($t_N$) should be evaluated at $\lbrace \x_{n-\tau_x},\bv_{n-\tau_v},\bu_{n-\tau_u} \rbrace$ and $\lbrace\x_N,\bv_N,\bu_N\rbrace$, respectively. \blue{Appendix \ref{appendix:a} details the expressions of the tangent matrices}. For instance, control tangent matrix ($\B$) at the intermediate and final time point reads
\begin{align}\label{e:c2}
\B &= \nabla_{\bu} \g(\x_{n-\tau_x},\bv_{n-\tau_v},\bu_{n-\tau_u}) \\
\B_N &= \nabla_{\bu} \g(\x_N,\bv_N,\bu_N).
\end{align}

Now we restore to the numerical integration of the HBVP \blue{in Eq. \eqref{e:b8}-\eqref{e:b9}}. The generalised $\tau-$scheme results in the discrete system in Box \ref{b:TI}.

\begin{BOX}[!htb]
\begin{empheq}[box=\fbox]{align*}
&\quad \mathbf{\Lambda}^\mathsf{T}(\mathbf{\Lambda} \x_{n-\tau_x}-\x_d)+\K^\mathsf{T} \bl_{n-\tau_{\lambda}} + \frac{\bm_{n}-\bm_{n-1}}{\Delta t}=\0, \\
&\quad \G^\mathsf{T} \bl_{n-\tau_{\lambda}}+\bm_{n-\tau_{\mu}}=\0, \\
&\quad \bv_{n-\tau_v}-\frac{\x_{n}-\x_{n-1}}{\Delta t} = \0, \\
&\quad \g(\x_{n-\tau_x},\bv_{n-\tau_v},\bu_{n-\tau_u}) = \0, \\
&\quad \alpha \bu_{n-\tau_u}+ \B^\mathsf{T} \bl_{n-\tau_{\lambda}}=\0,\quad \text{for}\ n=1,\ldots, N, \\
&\quad \alpha \bu_N+ \B_N^\mathsf{T} \bl_N=\0 \\
&\text{with} \\
&\blue{\quad \x(0)=\x_0,\ \bv(0)=\bv_0},\\
&\quad \bc( \x-\overline{\x},\bv-\overline{\bv})=\0,\\
&\quad  \bm_N= \mathbf{\Lambda}^\mathsf{T}(\mathbf{\Lambda}\x_N-\x_d),\ \bl_N=-\G_N^\mathsf{-T} \bm_N.
\end{empheq}
\caption{\blue{Time Integration Scheme}}
\label{b:TI}
\end{BOX}

A generalised-$\tau$ integration algorithm can be used to design various integration schemes to accomplish desired numerical stability, accuracy and some time to preserve the integral of motion. For example, $\tau=0.5$ results in the mid-point scheme with second-order accuracy and preserve mechanical energy and linear momentum. Another combination, $\tau_x=\tau_v=0$ and $\tau_{\lambda}=\tau_{\mu}=\tau_u=1$ results in the symplectic Euler (SE) scheme which preserves the area in the phase-space (Liouville theorem) \cite{hairer02} and enhance the numerical stability of the optimal control problem \cite{ABJJM23}. In this work, we shall exploit the symplectic structure of the solution of HBVP and eventually use the \blue{SE} time integration scheme.

\section{Optimisation algorithm}\label{s:oalgo}

In this section, we shall discuss the computationally efficient solution procedure of the posed time-discrete HBVP. For initial admissible control history, one can integrate state DAE forward in time and after populating state history, adjoint DAE can be integrated backwards in time. After knowing state and adjoint histories, control history can be updated by taking admissible steps along the descent direction for objective functional. This procedure is popularly known as Forward Backward Sweep Method (FBSM) \cite{sharp21, lenhart07}. FBSM procedure should be repeated until the desired decrease in objective functional is achieved, or further iterations do not improve the objective functional significantly, or the norm of the control residue falls below a prescribed tolerance.


\subsection{Forward Backward Sweep Method}
In this work, Forward Backward Sweep Method (FBSM) is implemented in the following way \cite{lenhart07}:
\begin{enumerate}
\item \textbf{Initial guess}: Generate one admissible control trajectory \blue{$\bu^0\in\mathcal U_{ad}$} to start the algorithm.

\item \textbf{State time integration}: Given $\bu^{\blue{k}}$, \blue{initial condition $\x(0)=\x_0,\ \bv(0)=\bv_0$}, and boundary condition $\bc( \x-\overline{\x},\bv-\overline{\bv})=\0$, solve the following discrete non-linear problem at time $t_n$ ($n=1,2,\ldots, N$)
\begin{align}\label{e:d1}
\bv_{n-\tau_v}-\frac{\x_{n}-\x_{n-1}}{\Delta t} &= \0, \\
\g(\x_{n-\tau_x},\bv_{n-\tau_v},\bu_{n-\tau_u}) &= \0.
\end{align}

We resort to Newton-Raphson process for solving these equations: suppose, $\y_{n-1}:=\lbrace\x_{n-1},\bv_{n-1} \rbrace$ is known, then next state $\y_{n}:=\lbrace\x_{n},\bv_{n} \rbrace$ is iteratively updated $\y_n^{s+1}=\y_n^{s}+\Delta \y_n$ by solving following linearised system ($\y_n^1=\y_{n-1}$, $k=1,2,\dots$)
\begin{align}
\begin{bmatrix}
-\frac{1}{\Delta t}\textbf{I} & \blue{\tau_v}\textbf{I} \\
\blue{\tau_x} \K & \blue{\tau_v}\G \\
\end{bmatrix}_{\y_n^\mathsf{k} } \Delta \y_n= - \begin{bmatrix}
\bv_{n-\tau_v}-\frac{\x_{n}-\x_{n-1}}{\Delta t} \\
\g(\x_{n-\tau_x},\bv_{n-\tau_v},\bu_{n-\tau_u}) \\
\end{bmatrix}_{\y_n^\mathsf{k} } \nonumber.
\end{align}

\blue{Let us denote the resulting solution by $\y^k=\{\x^k$ and $\bv^k\}$.}

\item \textbf{Adjoint time integration}: Given \blue{$\bu^k, \x^k, \bv^s$} with the transversality conditions $\bm_N=\beta_3 \Lambda^\mathsf{T}(\Lambda \tilde{\x}_N-\x_d)$ and $ \bl_N=-\G_N^\mathsf{-T} \bm_N$, solve discrete adjoint linear system backward in time ($n=N,N-1,\ldots, 1$)
\begin{align}
\begin{bmatrix}
-\frac{1}{\Delta t}\textbf{I} & \blue{(1-\tau_{\lambda})}\K^\mathsf{T} \\
\blue{(1-\tau_{\mu})} \textbf{I} & \blue{(1-\tau_{\lambda})} \G^\mathsf{T} \\
\end{bmatrix} \begin{bmatrix}
\bm_{n-1}^{\blue{k}} \\
\bl_{n-1}^{\blue{k}} \\
\end{bmatrix}= - \begin{bmatrix}
\mathbf{\Lambda}^\mathsf{T}(\mathbf{\Lambda} \x_{n-\tau_x}^{\blue{k}}-\x_d)+ \blue{\tau_{\lambda}}\K^\mathsf{T} \bl_n^{\blue{k}} + \frac{1}{\Delta t} \bm_n^{\blue{k}}\\
\blue{\tau_{\lambda}}\G^\mathsf{T} \bl_n^{\blue{k}} + \blue{\tau_{\mu}}\bm_n^{\blue{k}} \\
\end{bmatrix} \nonumber.
\end{align}

\item \textbf{Control update}: Solve the control equation using Gradient Descent (GD) strategy. Directional derivative of the objective functional along $\bu$ can be expressed as ($\delta \bu=\veps \dd$, with $\dd$ a search direction)
\begin{align*}
\delta \hat{\mathcal{J}}(\bu)= \frac{\hat{\mathcal{J}}(\bu+\veps \dd)-\hat{\mathcal{J}}(\bu)}{\veps} \biggr\rvert_{\veps=0}= \int^T_0 \nabla_{\bu} \mathcal{H}^\mathsf{T} \delta \bu dt = 0.
\end{align*}
It can be noticed that maximum reduction of the objective function can be achieved if one moves opposite to the gradient direction $\nabla_{\bu} \mathcal{H}$ \cite{nachbagauer15}. Considering this, the discrete control residue at time $t_n$ is defined as $\br_n=\nabla_{\bu_n}\mathcal{H}$. Global residual vector $\br=\{\br_0,\ldots, \br_{N} \}^T$ at all time point reads
\begin{align*}
\br_{n-1}&= \alpha \bu_{n-\tau_u}+\B^\mathsf{T} \bl_{n-\tau_{\lambda}}, \ n=1,2, \ldots, N,\\
\br_N&=\alpha \bu_N+\B_N^\mathsf{T} \bl_N,
\end{align*}
and at each iteration $k$ we update the control variable as,
\begin{align}\label{e:updateu}
\bu^{k+1}=\bu^{k}+\theta^k \dd^k
\end{align}
with \blue{$\dd^k$} a search direction and $\theta^k$ the step-length at the $k$-th iteration and determined by appropriate \blue{line-search} scheme. \blue{If $||\br^{k+1}||<tol$, FBSM process is stopped. Otherwise,  step 2 is applied again.}

\end{enumerate}

\blue{In step 4, and in case that we require $\bu \in \mathcal{U}_{ad}:=[\mathcal{U}_{min},\mathcal{U}_{max}]^E$}, then we use a projection operation, where we update \blue{each elemental} control variable as
\[
\blue{{}^eu^{k+1}} =\min \left( \mathcal{U}_\text{max},\max(\mathcal{U}_\text{min},\blue{{}^e u^{k+1}}) \right)
\]

Search direction \blue{is initialised as $\dd^0:=-\br^{0}$} and is updated according to $\dd^{k+1}= -\br^{k+1}$. \blue{Further details of the line-search are given in the next subsection and in Appendix \ref{appendix:b}. Algorithm \ref{a:1}  summarises the FBSM process. }

\begin{algorithm}[!htb]
\begin{algorithmic} 
\Require $err=1; tol=10^{-3}, k=0, \bu_{min}, \bu_{max}, \bu^0$
\State $ \lbrace\blue{\x^0,\bv^0}\rbrace=\textbf{State}(\bu^0,\bc(\overline{\x},\overline{\bv}))$ \Comment {Solve state equation forward}
\State $ \lbrace\blue{\bm^0,\bl^0}\rbrace=\textbf{Adjoint}(\bu^0,\blue{\x^0,\bv^0},\bm_N,\bl_N)$ \Comment {Solve adjoint equation backward}
\State $ \br^0 = \nabla_{\bu} \hat{\mathcal{J}} (\bu^0,\blue{\x^0,\bv^0,\bm^0,\bl^0})$
\State $\dd^0 \leftarrow -\br^0$\Comment {Initial search direction}
\While{$err>tol$}
\State $\theta^k=\textbf{\blue{line-search}}(\hat{\mathcal{J}},\bu^k,\dd^k)$
\Comment {Line Search }
\State $\bu^{k+1} \leftarrow \bu^k+\theta^k\dd^k$
\If{$\bu_{min} >-\infty \, \text{or} \, \bu_{max}<\infty$}
\State $\bu^{k+1} \leftarrow \max \{\bu_{min}, \min \{\bu_{max},\bu^{k+1} \} \}$ \Comment {Projection step}
\EndIf
\State $ \lbrace\blue{\x^{k+1},\bv^{k+1}}\rbrace=\textbf{State}(\bu^{k+1},\bc(\overline{\x},\overline{\bv}))$ \Comment {Solve state equation forward}
\State $ \lbrace\blue{\bm^{k+1},\bl^{k+1}}\rbrace=\textbf{Adjoint}(\bu^{k+1},\blue{\x^{k+1},\bv^{k+1}},\bm_N,\bl_N)$ \Comment {Solve adjoint equation backward}
\State $ \br^{k+1} \leftarrow \nabla_{\bu} \hat{\mathcal{J}} (\bu^{k+1},\blue{\x^{k+1},\bv^{k+1},\bm^{k+1},\bl^{k+1}}) $ \Comment {Update residue}
\State $\dd^{k+1} \leftarrow -\br^{k+1}$\Comment {Search direction}
\State $err \leftarrow \max \{||\br^{k+1}||,||\bu^{k+1}-\bu^{k}|| \} $
\State $k \leftarrow k+1$
\EndWhile
\Ensure $\x^*,\bv^*,\bu^*,\mathcal{J}^* $ \Comment {Optimal solution}
\end{algorithmic}
\caption{\blue{Forward Backward Sweep Method combined with Gradient Descent line-search.}}
\label{a:1}
\end{algorithm}


\subsection{\blue{Line-search} scheme}

\blue{Line-search} schemes help in estimating an optimal/suitable step size $\theta^k>0$ along search direction $\dd^k$ such that \blue{$\hat{\mathcal{J}}(\bu^k+\theta^k \dd^k)<\hat{\mathcal{J}}(\bu^k)$}. There are mainly two ways to determine the \blue{line-search} parameter, namely exact and inexact methods\blue{, which correspond to exactly minimise $\hat{\mathcal{J}}(\bu)$, or just sufficiently reduce it \cite{nocedal06}}. In practical applications,
\blue{inexact backtracking schemes, such as Armijo and Wolfe method, are frequently used}. For large-size problems, solving forward and backwards is an expensive operation and best suited \blue{line-search} techniques must minimize this effort. The major drawback of the backtracking methods is that they often require many bisection operations to obtain a suitable $\theta_k$, which unnecessarily increases the computational burden on the FBSM. 

Unlike the backtracking method, Barzikai-Borwein \blue{line-search} scheme is a two-step method and usually accelerates the convergence rate of the GD method \cite{barzilai88}. The most attractive part of Barzikai-Borwein method is that it only requires one functional evaluation per iteration (see \blue{details} Appendix \ref{appendix:b}). In some cases, it is sufficient to take a small constant step length $\theta^k \approx \frac{1}{ ||\br^0||}$ along the search direction $\dd^k$, which should be reduced (generally halved) only if update leads to constraint violation.

\blue{We have adapted a stabilised version of Barzikai-Borwein algorithm to our update step in \eqref{e:updateu}. Appendix \ref{appendix:b} summarises the general and stabilised method, and also explains a methodology for ensuring positive steps. The details of the line-search update are given in Algorithm \ref{a:2} of Appendix \ref{appendix:b}. }


\section{Numerical examples}\label{s:numexp}

In this section, we show some numerical examples to validate the universality of the proposed computational framework. To solve the forward and optimal control problem, we developed an in-house Matlab code and employed it for the study. In the first example, \blue{we verify the growth formulation with a simple bending beam. In the second example,} a numerical solution to the forward problem is presented to simulate the distinct locomotion pattern adopted by limbless organisms. Additionally, the centre of mass sensitivity with the degree of substrate anisotropy and growth wave parameters viz. frequency and wave number has been studied. In the \blue{third} example, we seek the optimal growth distribution adopted by distinct gait which propels the centre of mass in a given amount of time to the desired target location. In both examples, 8 noded linear brick element with $3\times 3\times 3$ Gauss quadrature rule has been used. Total event duration \blue{$T$} is 4s with time-step size $\Delta t=0.05 s$ and viscous regularisation coefficient $\mu_o=10^{-3}$ is kept fixed in all numerical tests. Moreover, compressible Neo-Hookean model parameters viz. shear modulus $\mu=100$ and bulk modulus $\lambda=0.1 \mu$ have been used \blue{in our simulations}.

\subsection{\blue{Verification of growth formulation}}\label{s:ver}

In this section, we will benchmark the developed in-house code with the analytical solution. Let us consider the following Dirichlet BVP:
\begin{align*}
\min_{\xf} & \, \, \mathcal{U}(\xf, \buf) \\ \nonumber
\text{s.t.},\\   \nonumber
&\xf(\zx)=\overline{\x} 		&& \text{on } \Gamma_o^x \\ \nonumber
&\buf(\zx)=\bu_o &&\text{in } \Omega_o
\end{align*}
where growth $\buf$ is prescribed and the energy functional $\mathcal{U}$ is defined in Eq. \eqref{e:a3}. 
\begin{figure}[!htb]
\centering
\includegraphics[width=0.45\textwidth]{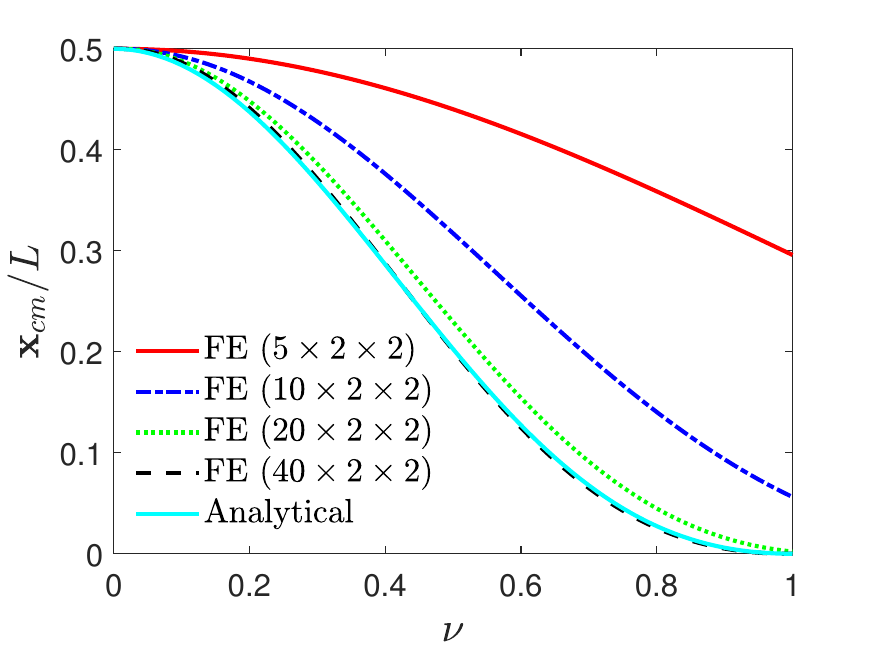} 
\includegraphics[width=0.46\textwidth]{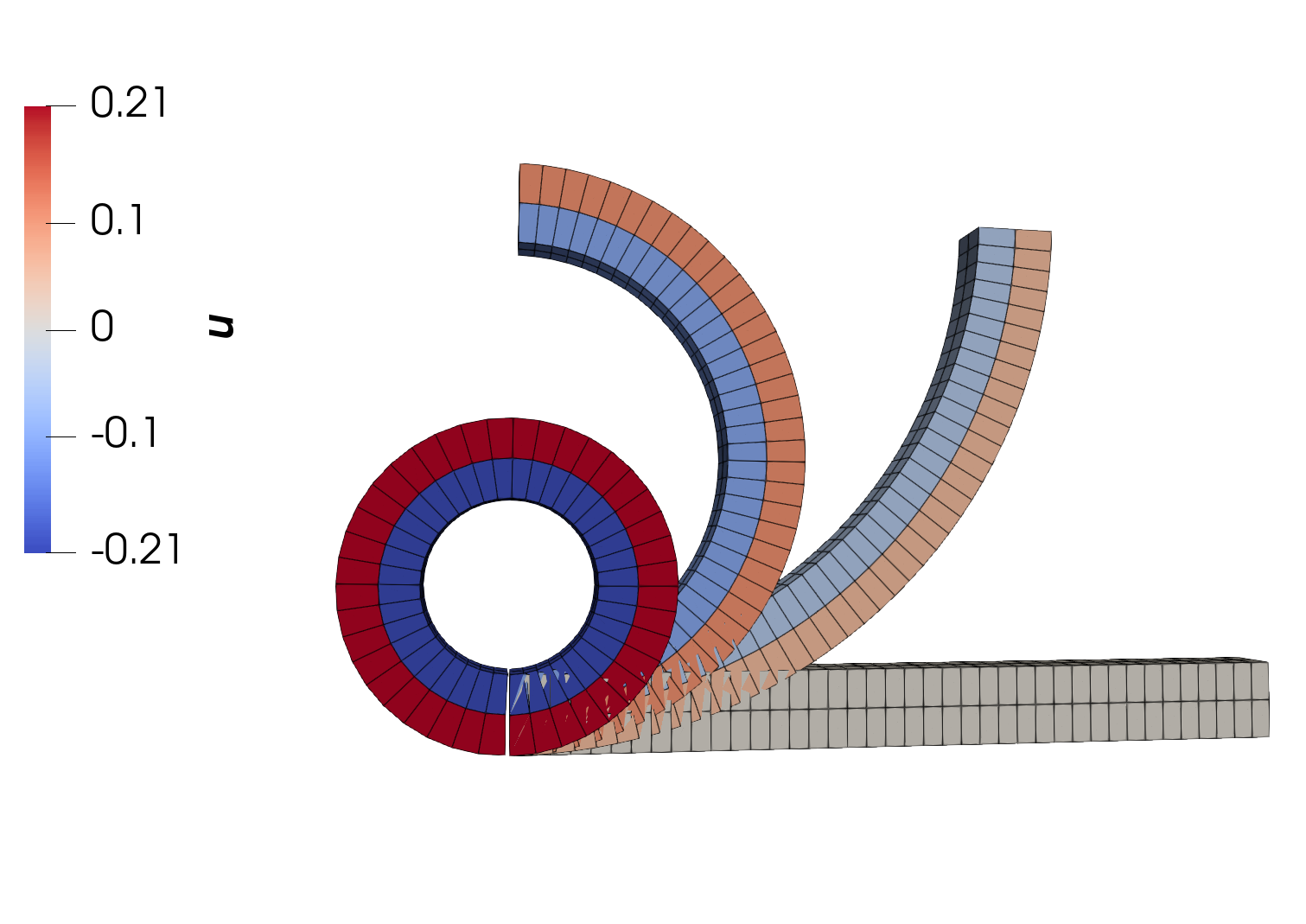}
\caption{Teatherd worm on frictionless substrate: (a) centroid $\x_{cm}$ evolution with growth $u$ and mesh refinement $n$, and (b) deformed shape (Ventral (Red) and Dorsal (Blue)).}
\label{fig: 10}
\end{figure}

We solve the above problem with the finite element method as described in the section \ref{s:compu2field}. Now we proceed to the closed-form solution of the cantilever beam subjected to antagonistically acting dipole forces. Let us consider\blue{a thin beam with geometry as shown in Fig. \ref{fig: 2}b, where length $L > > B$ and cross-sectional area is equal to $B^2$. We assume the muscles at top region} uniformly contract with growth $u=-u_o$, while bottom muscles uniformly expand with growth $u=+u_o$. 
\begin{align*} 
u = \begin{cases}
-u_o, & \text{if } \,    z \in [0, \, 0.5 B]\\
+ u_o, & \text{if } \,   z \in [-0.5 B, \, 0].
\end{cases}
\end{align*}

\blue{The fix Dirichlet boundary $\Gamma_o^x$ correspond to the left end, whre no longitudinal displacements are allowed}. Using classical beam theory, total stretch $\lambda$ can be decomposed into elastic, $\lambda_e=1+\varepsilon_e$, and growth  stretch $\lambda_g=1+u$, i.e., $\lambda=\lambda_e \lambda_g$. With linear approximation, total strain ($\varepsilon=- \kappa z$, $\kappa$ is principle curvature), decomposes into the sum of growth strain $\varepsilon_g=u$ and elastic strain $\varepsilon_e$, i.e., $\varepsilon=\varepsilon_g + \varepsilon_e$. Uniaxial stress for linear elastic beam in terms of flexural rigidity $D$ reduces to $\sigma=\frac{12 D}{B^3} (\varepsilon-\varepsilon_g)$. Then, at any cross-section, resultant moment $M$ and growth moment $M_g$ reads \cite{taber20}
\begin{align}\label{e:m2}
M &= - \int_{0}^{B} \int_{-B/2}^{B/2} \sigma z \,  dz dy= D \kappa + M_g \\
M_g &= \frac{12 D}{B^3} \int_{-B/2}^{B/2} u z  dz=\frac{12 D}{B^3} \left( \int_{-B/2}^{0} u_o z dz + \int_{0}^{B/2} (-u_o) z dz \right) =-\frac{3 D u_o}{B} 
\end{align}

Since beam is under self-equilibrated moments ($M=0$), curvature $\kappa$ at a point along the neutral-axis reads
\begin{align}\label{e:m4}
\kappa = - \frac{ M_g }{D}=\frac{3 u_o}{B}.
\end{align}

One can observe that curvature $k=3 u_o/B$ at any point on the neutral-axis is constant and represents a state of pure bending. With Eq. \eqref{e:m4}, $u_o=\frac{2 \pi \nu B}{3L}\approx 0.21 \nu$ \blue{where}  $\nu=0.25, 0.5,1$ represents the growth required to bend the beam into a quarter circle, semi-circle and a full circle, respectively. Then, the horizontal component of the worm centroid, $x_{cm}$, can be written as \cite{bijjm23}
\begin{align}\label{e:m5}
x_{cm} =\frac{2}{\kappa^2 L}  \sin^2 \left( \frac{\kappa L}{2}\right)  .
\end{align}

Theoretical results show that the horizontal displacement of the centroid is independent of beam material and should carry forward to the non-linear material model provided additive decomposition of strain remains valid (compressible Neo-Hookean solid with high value of shear modulus $\mu \approx 100$ and $\lambda \approx 0.1 \mu$). We have validated these findings by discretising the geometry $L \times B \times B$=$10 \times 1 \times 1$ with the finite elements ($n \times 2 \times 2$) and studied the convergence behaviour with grid refinement along the longitudinal direction ($n=5, 10, 20, \text{and } 40$). As depicted in Fig. \ref{fig: 10}a, centroid trajectories with $20$ and $40$ elements are consistent with the analytical result. Fig. \ref{fig: 10}b shows three snapshots of the initially straight beam transforming into a quarter circle ($\nu=0.25$), semi-circle ($\nu=0.5$) and full circle ($\nu=1$), which justifies the pure bending effect.


\subsection{Locomotion on the soft substrate: forward dynamics}

Lets us consider a slender limbless organism on a flat interface with rest length $L$ and cross-sectional area $B^2$, with $B=m L$. In the undeformed state, the Ventral-Dorsal Skeletal Muscles (VDSM) system, responsible for the muscle's active response, is assumed to be running parallel to the worm body length $L$. Essentially, we assume the origin of the distinct locomotion pattern emerges from the rhythmic contraction-extension activity of the VDSM and which constitutes a contractile dipole with antagonistic or synergistic polarity. For example, the antagonistic action of VDSM may result in undulatory or inching gait exhibited by \blue{\emph{C. elegans}} and Caterpillar, respectively. On the contrary, the synergistic action of VDSM may result in crawling gait found in larvae (maggot) or earthworms. Fig. \ref{fig: 2} shows the possible dipole polarity at various cross-sections running parallel to the worm body length. \blue{However, the adoption} of different gait strongly \blue{depends} on the degree of substrate frictional anisotropy. In this example, we shall focus on the generation of distinct gait pattern, their characteristics and the role of frictional anisotropy. We have mainly considered three distinct gait patterns generally adopted by the limbless organism on the soft substrates viz. undulatory, crawling and inching. Throughout, we discretised the cuboid geometry with $20 \times 2 \times 2$ finite elements along the length $L=10$,  and cross-sectional area $B^2=m L \times m L $. We use cross-sectional shrinkage factor $m=0.05$ and $m=0.1$ for inching gait and undulatory/crawling gait, respectively. For inchworms, a portion of the front and backward body part $W$ is constrained to be in contact with the substrate.
\begin{figure}[!htb] 
\centering
\subfigure[$\text{\blue{\emph{C. elegans} (undolutary)}}$]{\includegraphics[width=0.55\textwidth]{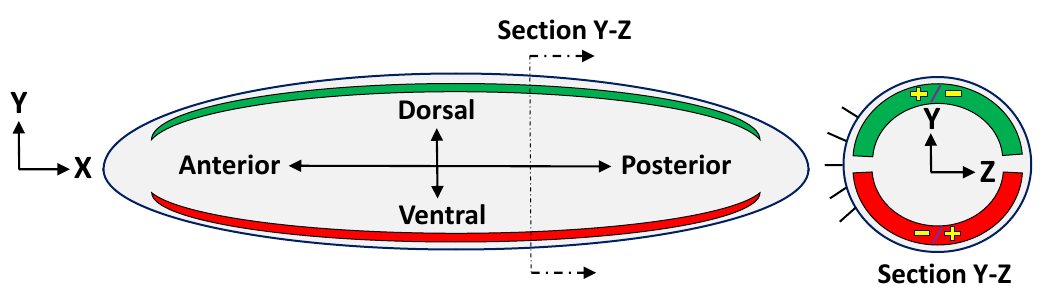}}
\subfigure[$\text{\blue{\emph{C. elegans}} model}$]{\includegraphics[width=0.35\textwidth]{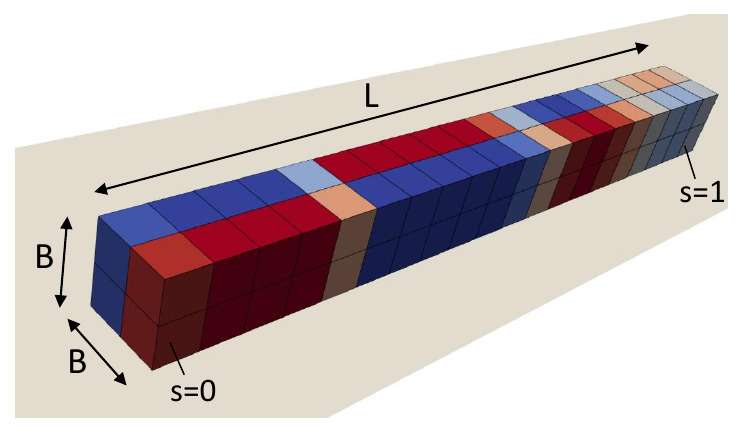}}
\subfigure[$\text{Larvae \blue{(crawling)}}$]{\includegraphics[width=0.55\textwidth]{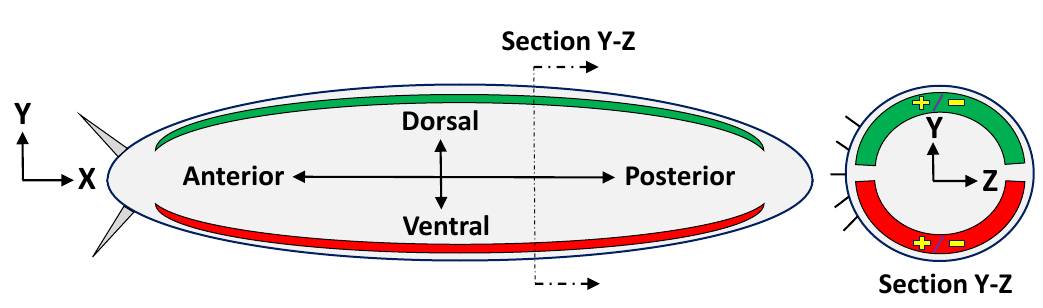}}
\subfigure[$\text{Larvae model}$]{\includegraphics[width=0.35\textwidth]{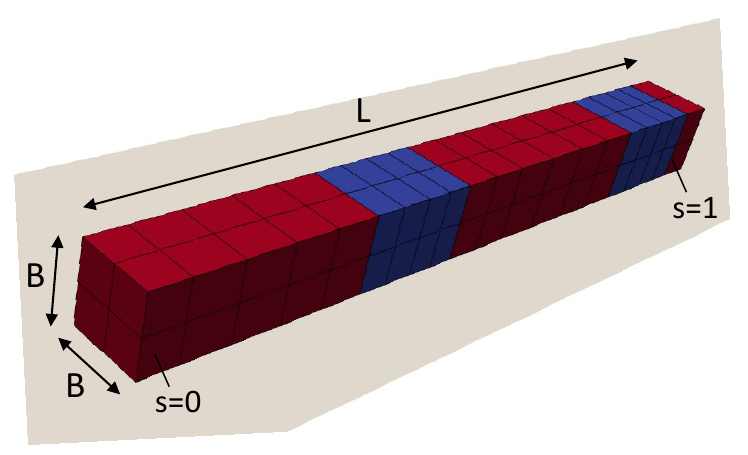}}
\subfigure[$\text{Caterpillar \blue{(inching)}}$]{\includegraphics[width=0.55\textwidth]{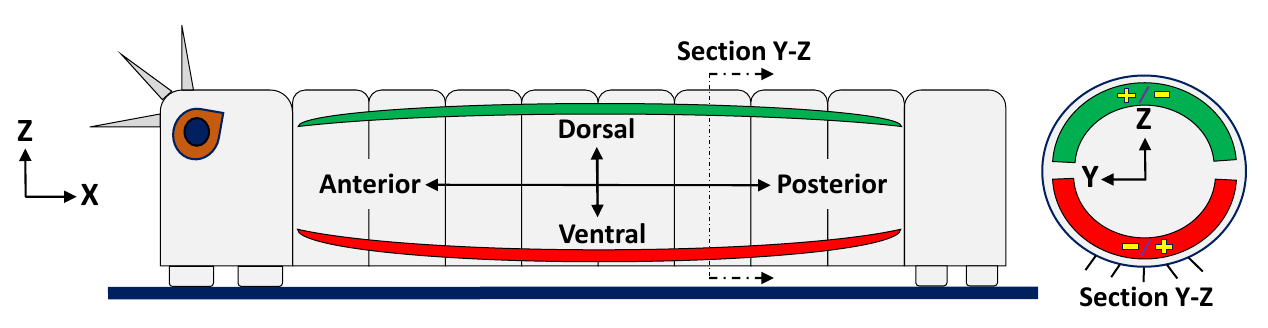}}
\subfigure[$\text{Caterpillar model}$]{\includegraphics[width=0.35\textwidth]{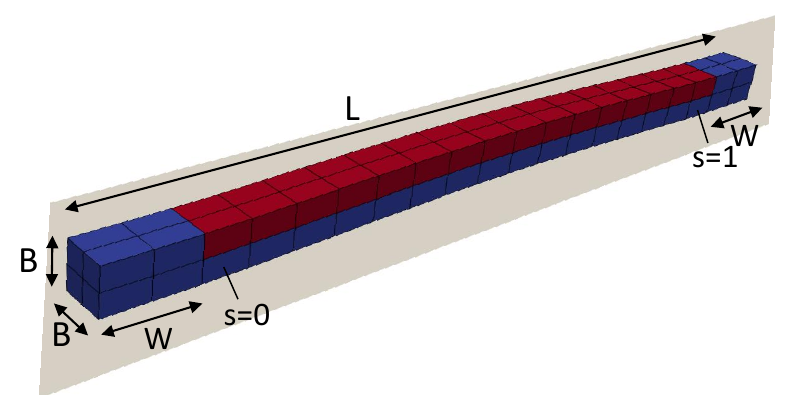}}
\caption{Left: Dorsal and ventral muscles arrangement in: (a) \blue{\emph{C. elegans}}, (c) Larvae, and (e) Caterpillar. Right: Contractility patterns in corresponding models.}
\label{fig: 2}
\end{figure}

Lets us begin with an undulatory limbless worm on a soft substrate with antagonistic action of laterally arranged VDSM. The arrangement of VDSM exerts muscular force in a specific fashion and induces frictional anisotropy with more frictional along the lateral direction ($\mu_l$) compared to the tangential direction ($\mu_t$). We parametrise the worm muscles length with a measure $s \in [0,1] $ and assume all muscles bundles inside the ventral and dorsal chamber at a given cross-section undergoes exactly equal and opposite amount of growth (Fig. \ref{fig: 2}a). In numerical implementation parameter $s$ is evaluated at the centroid of finite element approximating the undeformed worm, which makes elemental growth only a function of time. Considering this, the internal growth distribution $u(s,t)$ originated from the coordinated action of VDSM and can be expressed as a progressive wave. Then, growth disturbance in the dorsal and ventral chamber at any section $s$ \blue{and} time $t$ reads
\begin{align*}
u(s,t) &= + u_o \sin{(2 \pi f t + 2 \pi \gamma s)} && \text{(Ventral chamber)}\\
u(s,t) &= -u_o \sin{(2 \pi f t + 2 \pi \gamma s)} && \text{(Dorsal chamber)} 
\end{align*}
where $u_o \in \mathbb{R}$ is the amplitude of wave, $f$ the frequency, and $\gamma$ the wave number.

For crawling limbless locomotion, laterally arranged VDSM act synergistically and introduce frictional anisotropy along forward ($\mu_f$) and reverse ($\mu_b$) direction of advancement. The presence of setae or segmented body structure introduce anchoring phenomenon are a common source for breaking substrate frictional isotropy. Assuming all muscle bundles inside the ventral and dorsal chamber at a given cross-section undergo exactly the same amount of growth (Fig. \ref{fig: 2}c). Then, growth disturbance $u(s,t)$ in dorsal and ventral chamber at any section $s$ \blue{and} time $t$ reads
\begin{align*}
u(s,t) &= + u_o \sin{(2 \pi f t + 2 \pi \gamma s+\pi)} && \text{(Ventral chamber)}\\
u(s,t) &= + u_o \sin{(2 \pi f t + 2 \pi \gamma s+\pi)} && \text{(Dorsal chamber)} 
\end{align*}

For inching limbless locomotion, vertically arranged VDSM acts in an antagonistic fashion and introduce frictional anisotropy along forward ($\mu_f$) and reverse ($\mu_b$) direction of advancement (Fig. \ref{fig: 2}e). Unlike the previous gait cycle confined on the substrate, inching motion results in out-of-plane deformations. Furthermore, the shape of the inching gait is unaltered irrespective of locomotion speed as observed in Caterpillar. For lucidity, Caterpillar motion is decomposed in the following four stages (see Fig. \ref{fig: 2}e)
\begin{enumerate}

\item Contraction of ventral muscles and expansion of corresponding dorsal muscles, \blue{resulting in out-of-plane bend shape.}

\item Bending induces interface frictional forces which increase with the muscles contraction. 

\item Anchoring of the anterior body part results in a high frictional interface while airlifting the posterior body portion gives one stroke of inching motion. \blue{This differential friction tends to slide the anterior portion backwards and the posterior portion forward, but with a net forward motion.}

\item Gradually relaxation of ventral muscles and dorsal muscles while anchoring the posterior body part and forward airlifting of the anterior body part. This constitutes another stroke of the inching cycle and should be repeated continuously by modulating the frequency of stroking.

\end{enumerate}

\begin{figure}[!htb]
\centering
\subfigure[$\text{Undulatory}$]{\includegraphics[width=0.45\textwidth]{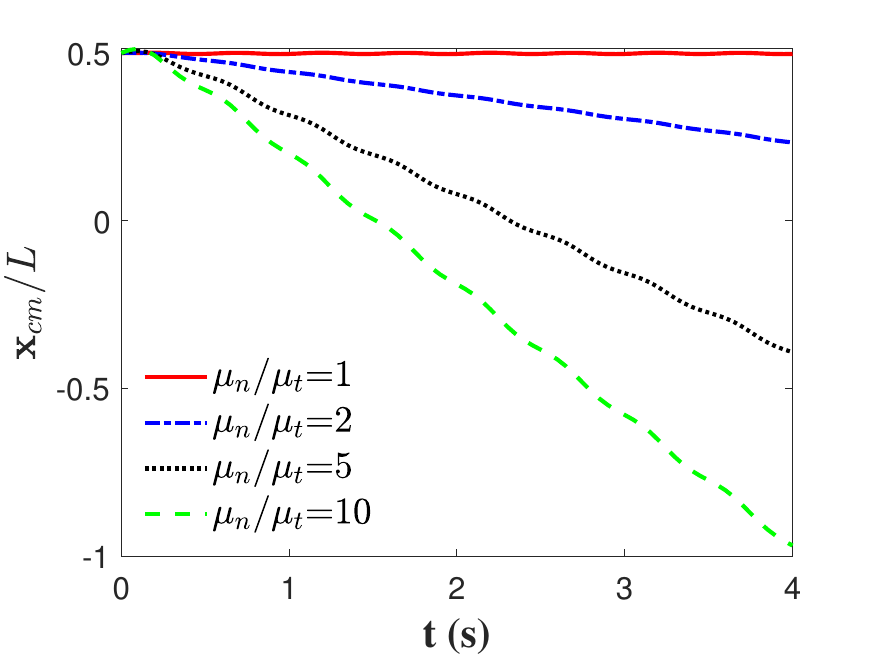}}
\subfigure[$\text{Crawling}$]{\includegraphics[width=0.45\textwidth]{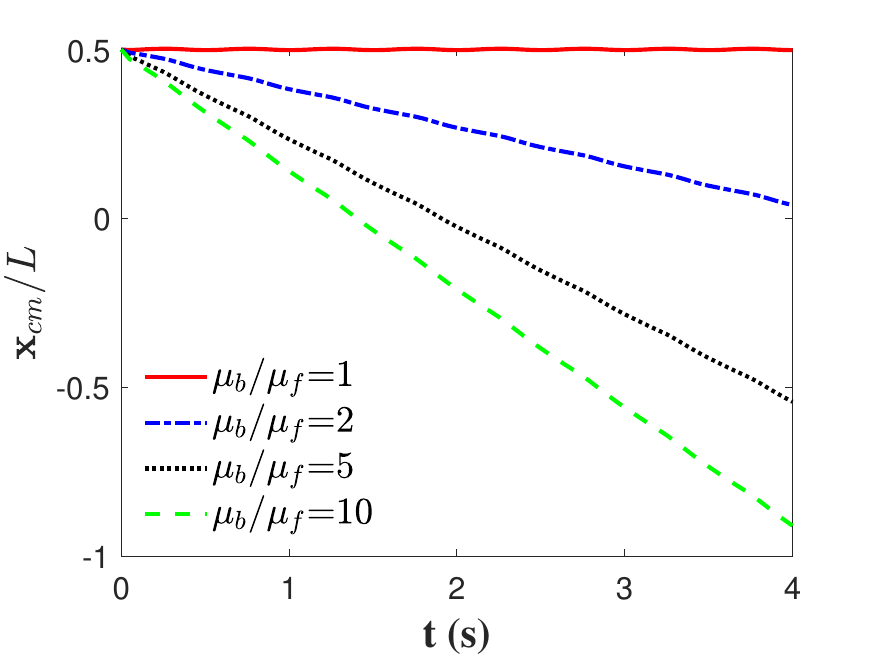}}
\subfigure[$\text{Inching}$]{\includegraphics[width=0.45\textwidth]{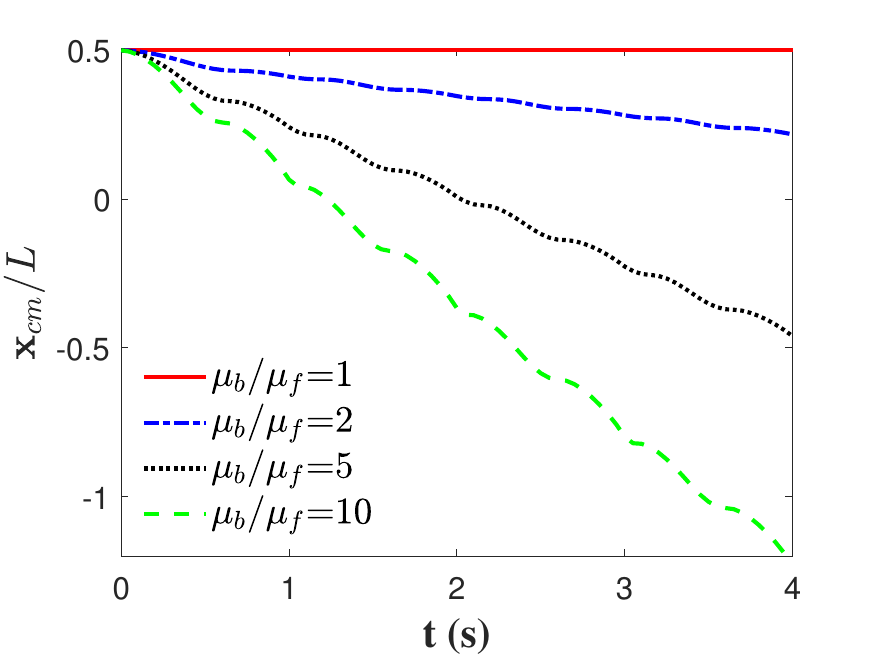}}
\subfigure[Anisotropic and Isotropic substrate]{\includegraphics[width=0.45\textwidth]{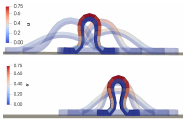}}
\subfigure[$\text{Anisotropic substrate}$]{\includegraphics[width=0.9\textwidth]{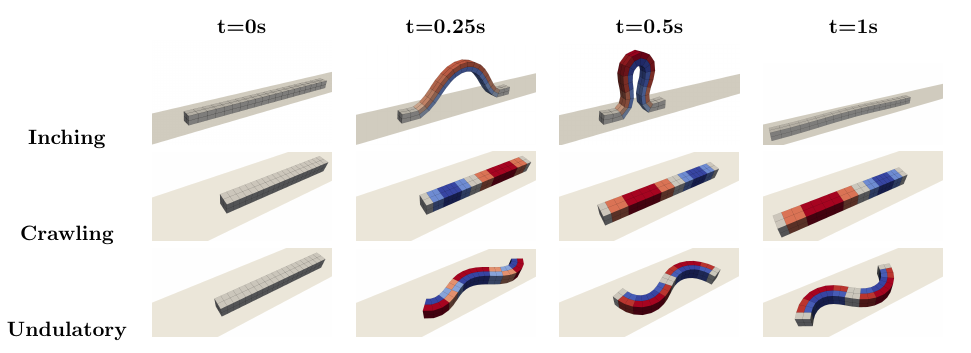}}
\caption{Worm centroid evolution with given muscles actuation and varying degree of frictional anisotropy: (a) undulatory gait (\blue{\emph{C. elegans}}), (b) crawling gait (larvae), (c) inching gait (caterpillar), (d) caterpillar on anisotropic (top with $\frac{\mu_b}{ \mu_f}=10$) and isotropic (bottom with $\frac{\mu_b}{ \mu_f}=1$) substrate, and (e) animation of distinct locomotion pattern for 1-second duration (periodic onwards)}
\label{fig: 3}
\end{figure}

The above four stages form a very complicated locomotion pattern and one has to ensure non-penetration constraint between a worm-substrate interface with periodic transition/switching between Dirichlet and Neumann boundary conditions. For computational efficiency, we penalise backwards sliding motion by enforcing high frictional conditions, whereas forward motion is appreciated by providing negligible frictional resistance i.e., $\mu_b >\mu_f$. 

Assuming all muscle bundles inside the ventral and dorsal chamber at a given cross-section undergo exactly equal and opposite amounts of growth (Fig. \ref{fig: 2}e). Then, growth disturbance $u(s,t)$ in dorsal and ventral chamber at any section $s$ and time $t$ can be approximated as
\begin{align}\label{e:z3}
\begin{aligned}
u(s,t) &= - u_o | \sin{(n \pi f t)} | \sin{(2 \pi \gamma s)} && \text{(Ventral chamber)}\\
u(s,t) &= + u_o | \sin{(n \pi f t)} | \sin{(2 \pi \gamma s)} && \text{(Dorsal chamber)} 
\end{aligned}
\end{align}
where \blue{$n$ is number of inching cycle/strokes and we fix the wave number to $\gamma=0.5$.}\red{A PLOT SHOWING $u(s,t)$ as a function of $s$ for two times (two cycles), AND ANOTHER PLOT WITH $\mu_b$ and $\ mu_t$ SEEMS NECESSARY}

It is important to note that for limbless locomotion the internal growth disturbance is often a progressive wave and frequently adopted by undulatory and crawling gait. On the contrary, inching gait generates the growth distribution as standing waves and can be formally interpreted from Eq. \eqref{e:z3}. In limbless locomotion, the net displacement of the centroid is of prime importance and a key parameter to evaluate the efficiency of locomotion. \blue{This} evolution of centroid is significantly influenced by the magnitude of frictional anisotropy, and growth wave characteristics ($u_o,f, \gamma$).

We begin the analysis with the influence of the degree of frictional anisotropy on the centroid horizontal displacement. For a undulatory gait with growth wave characteristic $u_o=0.3, f=1 \,\text{Hz},\gamma=1 $, centroid displacement is plotted \blue{in Fig. \ref{fig: 3}a for different ratios $\mu_n /\mu_t\in[1,10]$ }. It can be seen that $\mu_n /\mu_t =1$ represents an isotropic frictional state and the centroid undergoes no net horizontal displacement\blue{, while} an increase in the degree of anisotropy leads to an increase in the net displacement of the worm centroid. Physically, ratio $\mu_n /\mu_t \approx 2$ represents a low frictional interface and swimming is the preferred mode whereas ratio $\mu_n /\mu_t \approx 10$ represents a high frictional interface such as agar \cite{fangb10}. Similarly, for crawling gait, wave characteristic $u_o=0.3, f=2\text{Hz},\gamma=1 $, centroid displacement is plotted \blue{ in Fig. \ref{fig: 3}b for the same interval of ratios $\mu_b /\mu_f\in[1,10]$}. As expected, isotropic substrate ($\mu_b =\mu_f$) results in no net displacement of the centroid and with increasing $\mu_b /\mu_f$ ratio, there is a corresponding increase in the net displacement of the centroid. \blue{Same conclusions can be drawn for inching gait in Fig. \ref{fig: 3}c, where we used $u_o=0.75, f=1 \,\text{Hz}, n=4$ and $\gamma=0.5$ to mimic the caterpillar-like gait.} For visualisation purposes, \blue{isotropic and non-isotropic} scenarios have been depicted in Fig. \ref{fig: 3}d and highlighted shape shows the body posture after one inching stroke. It can be concluded that the degree of frictional anisotropy plays an important role in regulating limbless locomotion on soft substrates.

Next, we analyse the sensitivity of growth wave shape $\gamma$ and frequency $f$ on the net horizontal displacement of the centroid in undulatory and crawling gait. Numerical experiments were performed for the design space $\gamma \times f =[0.25, 2.5] \times [0.25, 2]$ with a fixed amplitude $u_o=0.3$ and a fixed degree of anisotropy \blue{$\mu_n /\mu_t=10$ and  $\mu_b /\mu_f=10$}. It can be observed \blue{in Fig. \ref{fig: 4}a} that for undulatory locomotion \blue{and for a given waveform,} increasing frequency of undulation maximises the centroid displacement. However, for a given frequency, there is always a particular waveform which results in the maximum horizontal displacement of the centroid. \blue{Similar trends are found for the crawling gait results shown in Fig. \ref{fig: 4}b. However, in this case, the region with maximum centroid displacements correspond to waveforms $\gamma \in [0.25, 1]$}. One can conclude that to achieve a particular centroid displacement there are large combinations of waveform and frequencies and all three gait can reach the same destination by modulating these wave characteristics. These findings raise a few important questions such as (a) optimal undulatory gait is retrograde or prograde locomotion, (b) for similar geometry and similar physiology conditions, which gait is energy efficient? and (c) how does the organism size influence the locomotion efficiency? In the next example, we will address these questions and draw some important conclusions.

\begin{figure}[H]
    \centering
    \subfigure[$\text{Undulatory}$]{\includegraphics[width=0.45\textwidth]{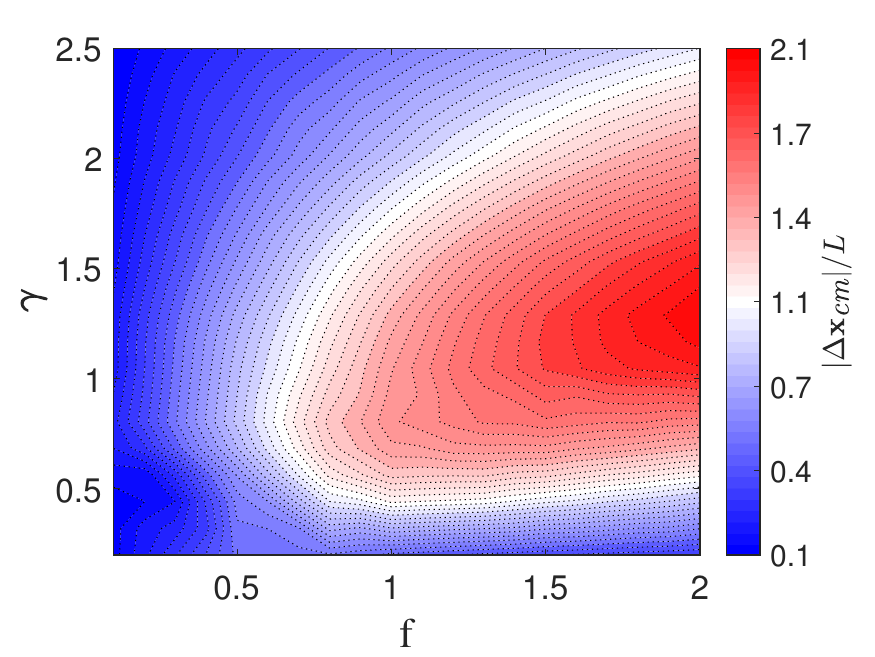}} 
     \subfigure[$\text{Crawling}$]{\includegraphics[width=0.45\textwidth]{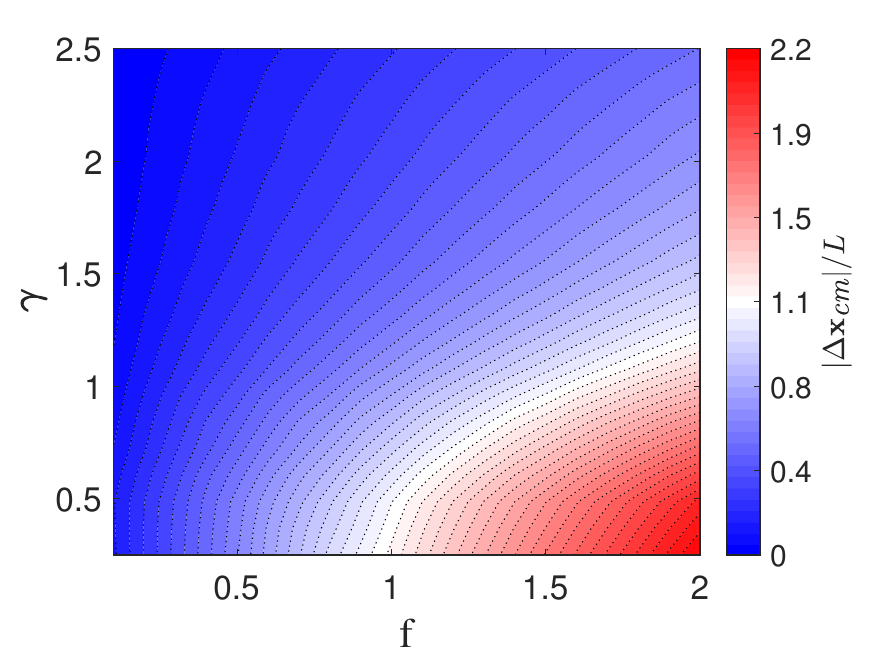}}	
    \caption{Worm centroid evolution with given degree of frictional anisotropy ($\frac{\mu_b}{ \mu_f}=10$) and varying muscles actuation frequency $f$ and shape $\gamma$: (a) undulatory gait (\blue{\emph{C. elegans}}), and (b) crawling gait (larvae).} 
   \label{fig: 4}
\end{figure}


\subsection{Optimal locomotion strategies}

We first examine the optimal locomotion trajectories adopted by limbless organisms with three distinct gaits. We are interested in the internal growth distribution ($\bu$) which propel worm centroid ($\x_{cm}$) from the initial state $\x_{cm}(0)=(5,0,0)$ to the desired location $\x_{cm}(T)=\x_d=(1,0,0)$ in fixed time duration ($T=4s$). Three FE models have been formed for undulatory, crawling and inching limbless locomotion. For fair comparison, we assign same material properties ($\mu=100, \lambda=0.1 \mu$), same geometry ($L \times B \times B=10 \times 1 \times 1$) with FE grid ($20 \times 2 \times 2$), and active growth is restricted to \blue{$u_o=0.3$}. Degree of frictional anisotropy \blue{ is set to $\mu_n /\mu_t=10$, and $\mu_b /\mu_f=10$ for} undulatory and crawling gait, respectively. However, for the inching gait to resemble the actual caterpillar locomotion, we assign frictional coefficient values $\mu_b=1$ and $\mu_f=0$. It is important to note that only one-half of FE cells (ventral chamber) growth participates in the optimisation process since the other half (dorsal chamber) is constrained to be antagonistically or synergistically related to it. Moreover, we assume at any cross-section all ventral muscles undergo exactly the same amount of growth and hence the size of the growth vector just reduces to the number of FE cells along the longitudinal direction (in this example 20 growth degrees of freedom at any time $t \in \mathcal{I}$).

\begin{figure}[!htb] 
\centering
\includegraphics[width=0.45\textwidth]{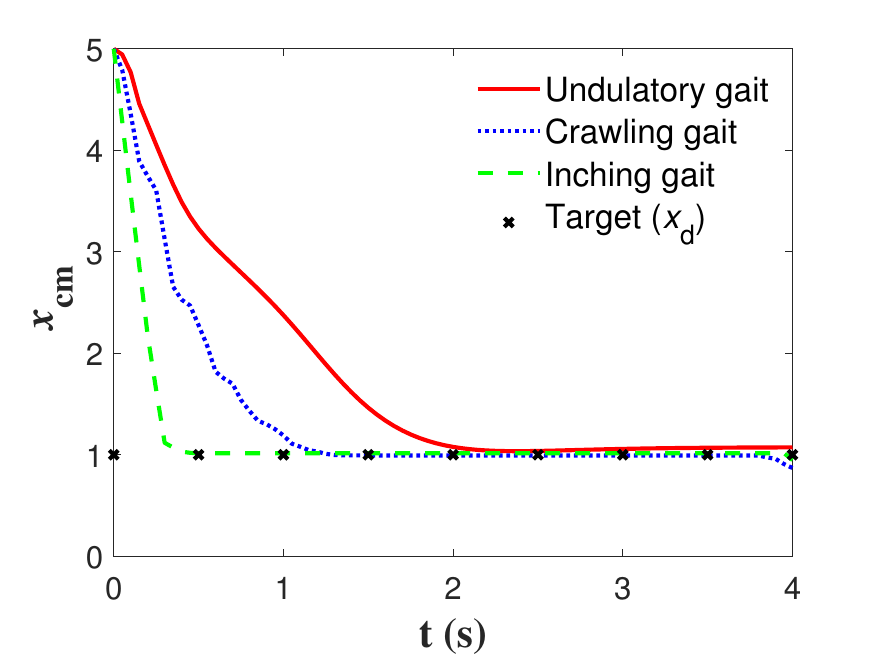} 
\includegraphics[width=0.45\textwidth]{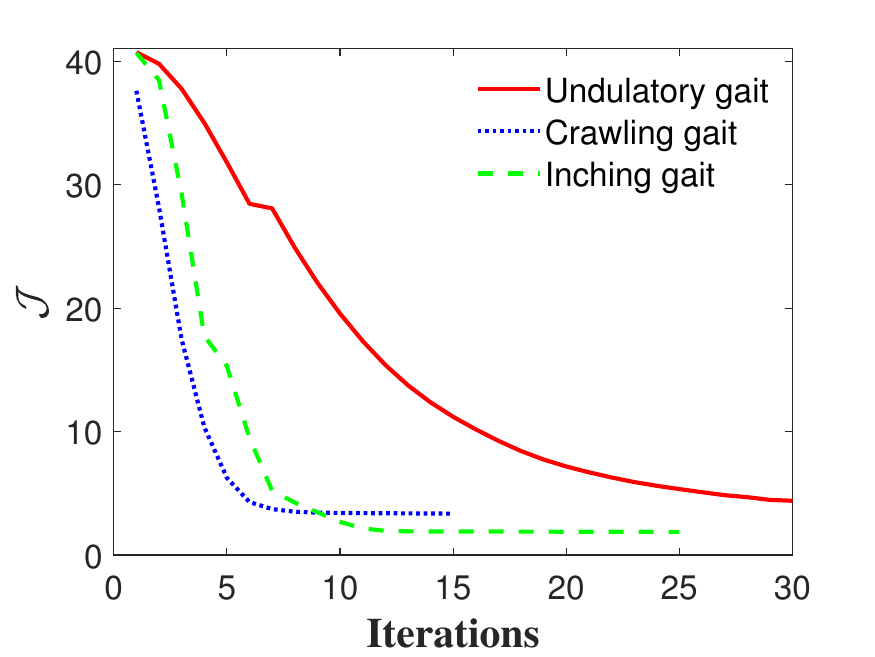}\\
\centerline{(a)\hspace{40ex}(b)}

\includegraphics[width=0.45\textwidth]{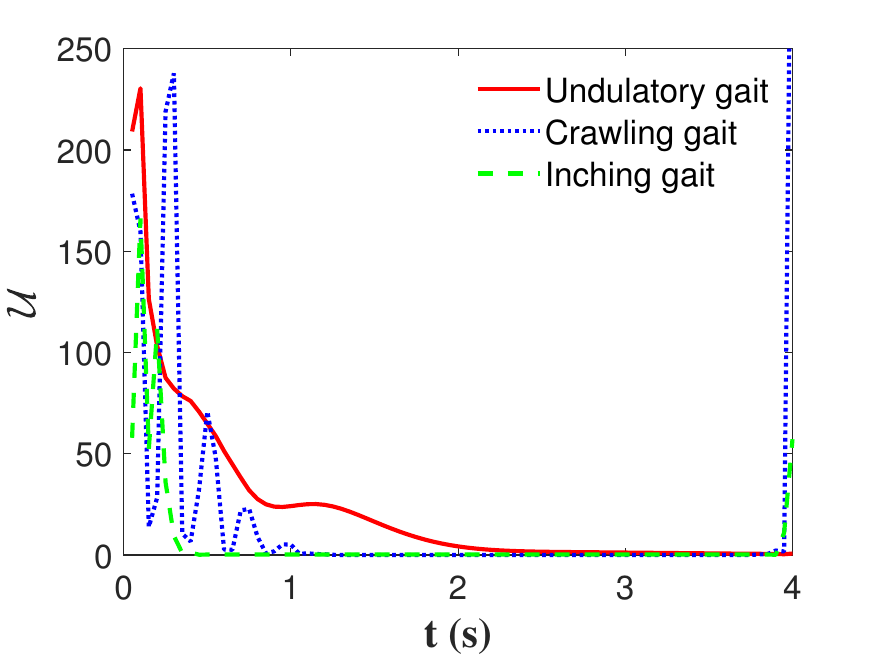}
\includegraphics[width=0.45\textwidth]{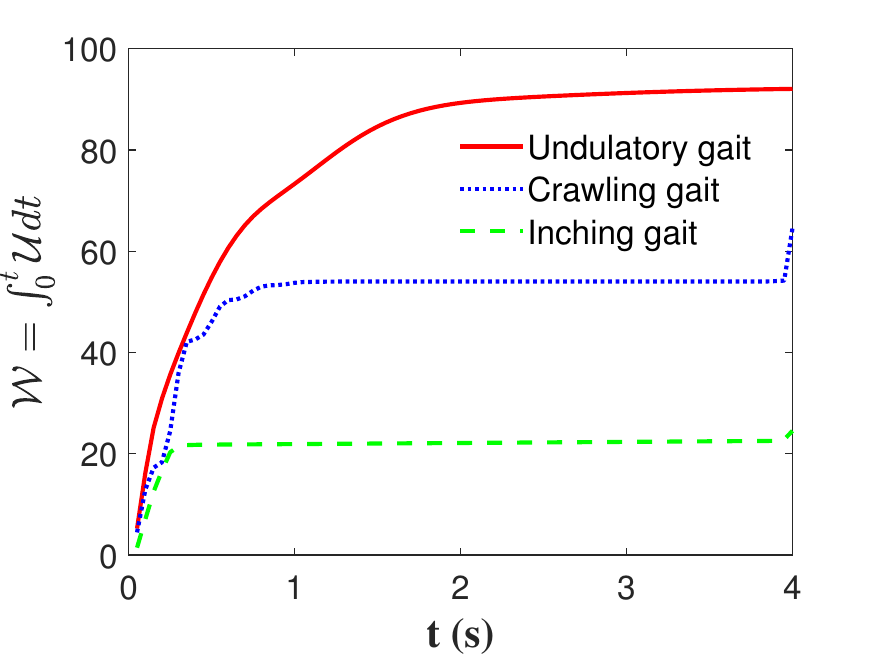}\\
\centerline{(a)\hspace{40ex}(b)}

\caption{Optimal locomotion in a limbless organism: (a) centre of mass ($\x_{cm}$), (b) cost functional ($\mathcal{J}$), (c) internal energy ($\mathcal{U}$), and (d) total energy spent ($\mathcal{W}$).}
\label{fig: 5}
\end{figure}

FBSM algorithm is initiated by assigning small growth $\bu^0=0.01 \mathbf{1}$. Growth distribution $\bu$ is confined to the space of admissible set $\mathcal{U}_{ad}=[-0.3,0.3]$ and $\mathcal{U}_{ad}=[-0.3,0]$ for undulatory/crawling and inching gait, respectively. The control regularisation parameter \blue{is fixed to $\alpha=10^{-3}$. The} Symplectic Euler time integration scheme ($\tau_x=\tau_v=0$, and $\tau_{\lambda}=\tau_{\mu}=\tau_u=1$ in Box \ref{b:TI}) has been used to solve the system of the discrete equation as described in Section \ref{s:oalgo}. For undulatory gait, the Barzikai-Borwein \blue{line-search} scheme has been used with $\theta_{max}=0.25$. For crawling and inching gait it was sufficient to assign a constant value to the \blue{line-search} parameter i.e., $\theta^k=\frac{1}{ ||\br^0||}$.

The simulation outcome for three gait cycles is plotted in Fig. \ref{fig: 5}. Among the three gaits, the inching gait \blue{attained} the desired system state in the least amount of time ($0.5 s$) while the undulatory gait took the longest time ($2 s$), and crawling duration ($1 s$) fall in the intermediate of both gaits (See Fig. \ref{fig: 5}a). Similar patterns have been observed in terms of total energy spent to attain the desired system state. It can be concluded that for similar physical conditions, the inching gait is most efficient followed by the crawling gait and the undulatory gait is \blue{the least} efficient\blue{, as Fig. \ref{fig: 5}d shows}. For completeness, we have shown the evolution of the objective functional \blue{as a function of the FBSM iterations in Fig. \ref{fig: 5}b,} and internal \blue{$\mathcal U$ and total spent energy $\mathcal W$ with time duration in Fig. \ref{fig: 5}c-d}.

Next, we plotted the optimal internal growth distribution predicted by the optimisation algorithm. For undulatory gait, growth distribution is moving backwards while the centroid is propelling in the forward direction, \blue{as Fig. \ref{fig:6}a shows}. This confirms that the optimal undulatory gait results in retrograde locomotion. Alteration of tangent vector (curvature) as a function of parameter $s$ is shown in Fig. \ref{fig:6}b. Curvature contour suggests that the worm body initially takes two sinusoidal waveforms which gradually reduce to one and a half waveforms. Fig. \ref{fig:6}c reveals that in crawling gait, growth disturbance is moving in the forward direction and in phase with the forward displacement of the centroid. This suggests that the optimal crawling gait results in prograde locomotion. Finally, the inching gait optimal growth distribution is shown in Fig. \ref{fig:6}d. Initially, the caterpillar generates a sharp rectangular growth pulse and in subsequent strokes, it converges to a half-sinusoidal pulse and then vanishes as the system attains its desired state.

\begin{figure}[!htb] 
\centering
\includegraphics[width=0.45\textwidth]{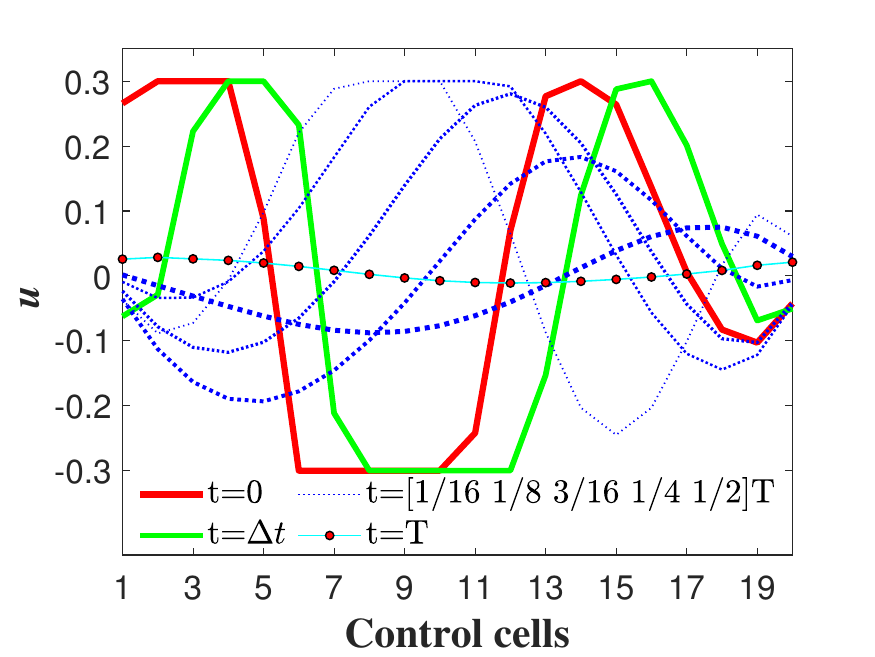} 
\includegraphics[width=0.45\textwidth]{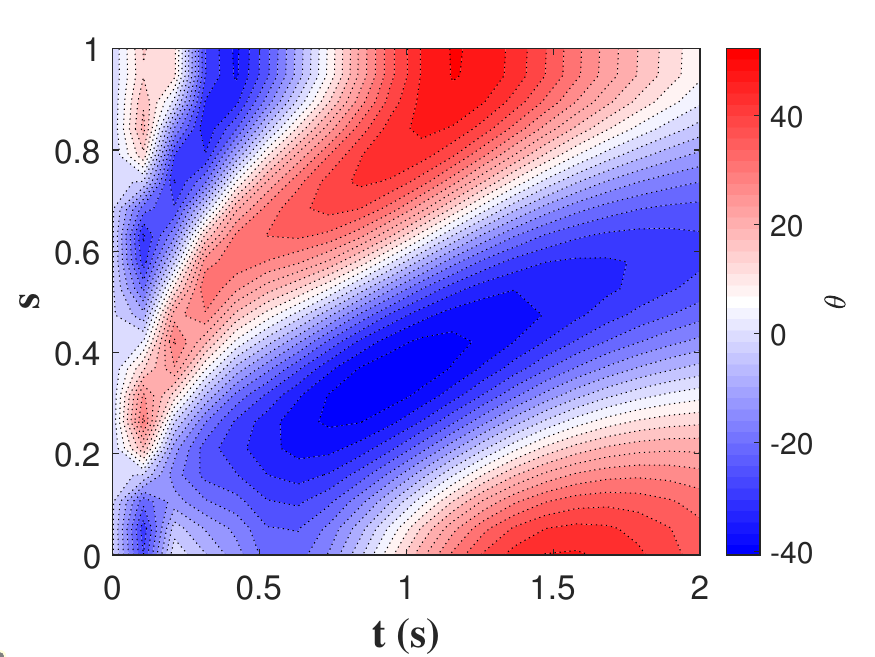}
\includegraphics[width=0.45\textwidth]{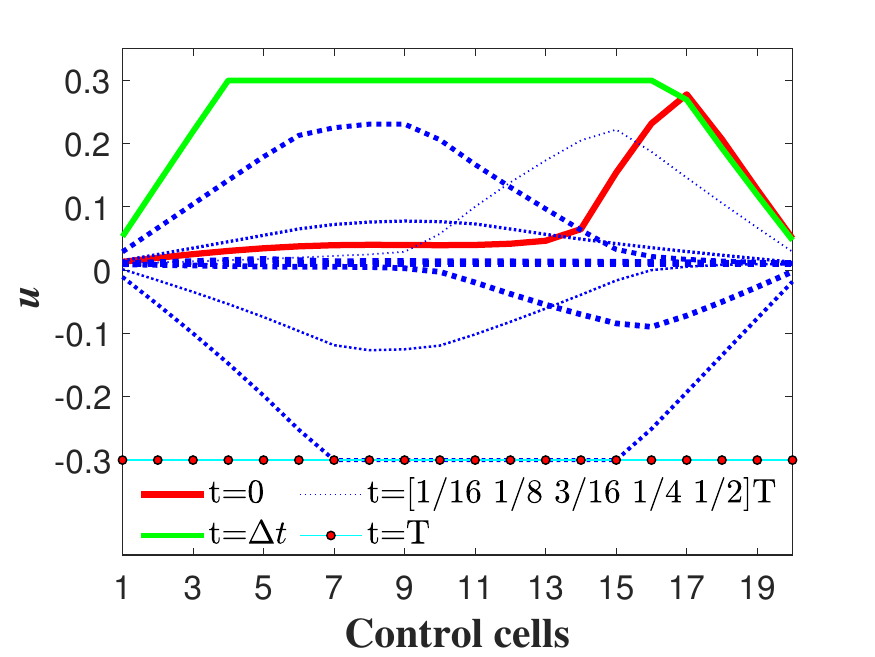}
\includegraphics[width=0.45\textwidth]{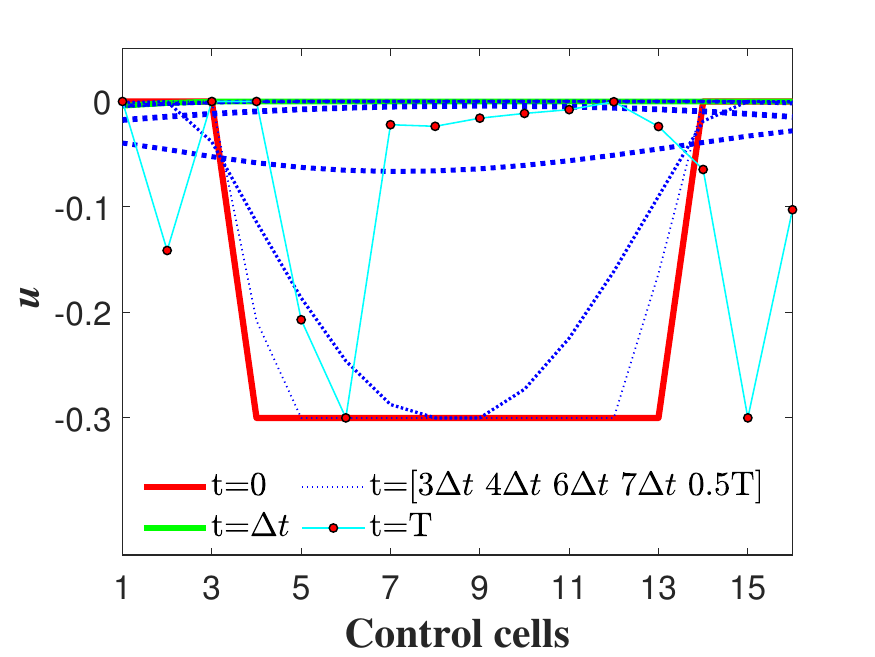}
\caption{Optimal locomotion in a limbless organism: (a) undulatory growth distribution, (b) undulatory body curvature variation, (c) crawling growth distribution, and (d) inching growth distribution.}
\label{fig:6}
\end{figure}

\subsection{Size effects}\label{s:size}

The influence of size effects on the limbless organism locomotion efficiency has been studied by doubling the initial volume i.e., $L \times B \times B=20 \times 2 \times 2$. We seek internal growth distribution which propels scaled worm centroid from initial state $\x_{cm}(0)=(10,0,0)$ to the final state $\x_{cm}(T)=\x_d=(5,0,0)$ in the fixed time duration ($T=4s$). We keep all other parameters unchanged compared to the unscaled version of this problem.
\begin{figure}[!htb]
\centering
\includegraphics[width=0.4\textwidth]{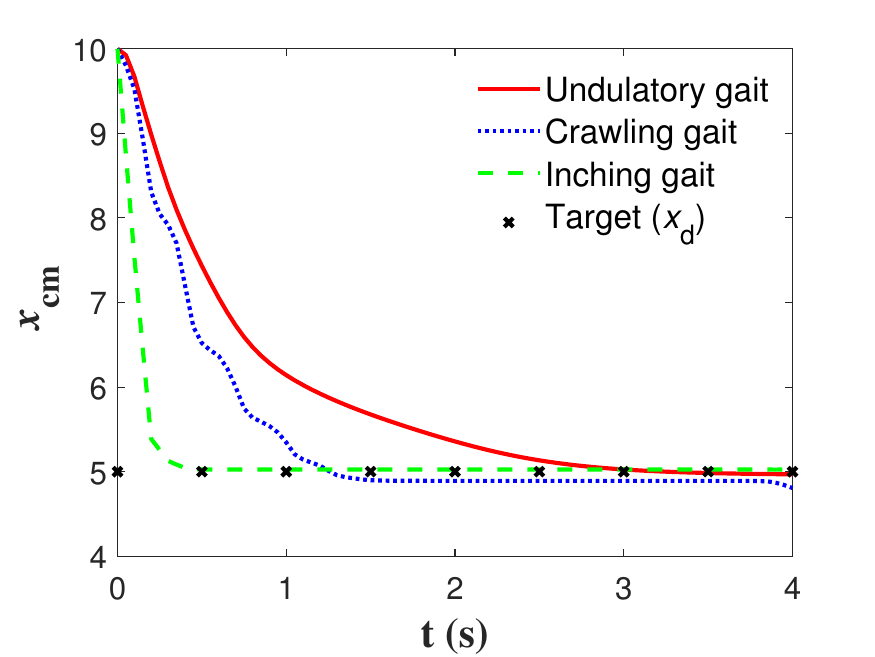} 
\includegraphics[width=0.4\textwidth]{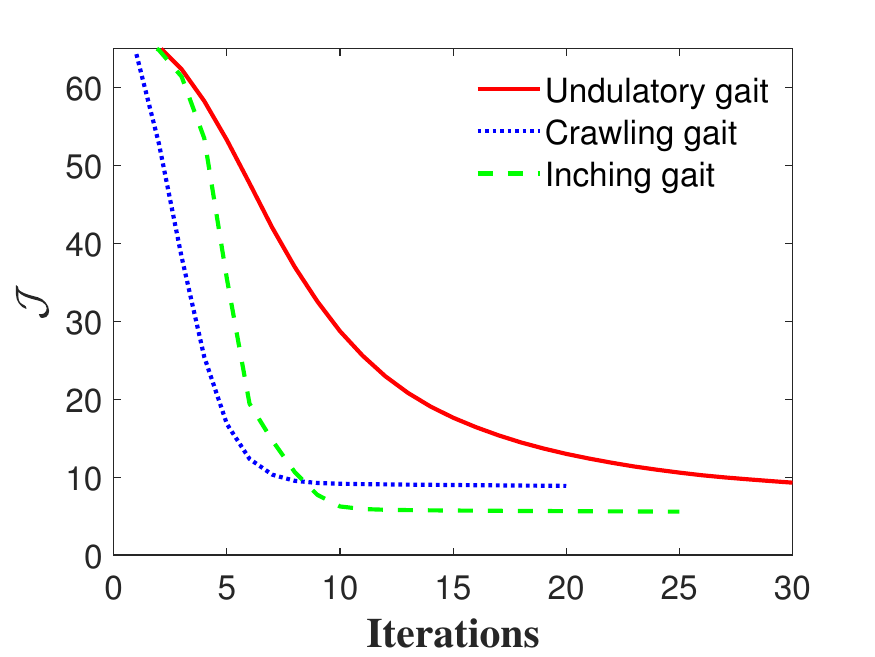}\\
\centerline{(a)\hspace{40ex}(b)}

\includegraphics[width=0.4\textwidth]{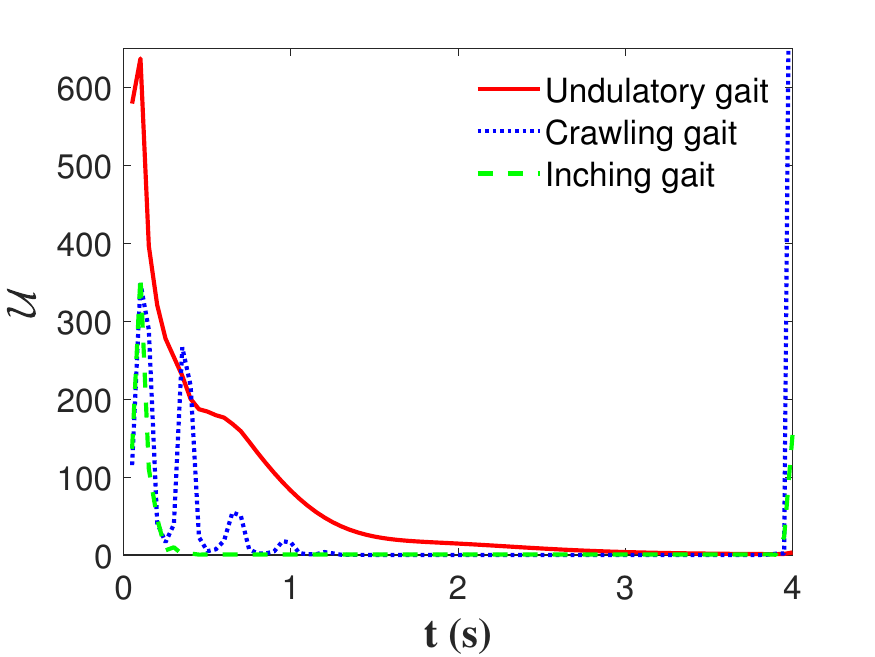}
\includegraphics[width=0.4\textwidth]{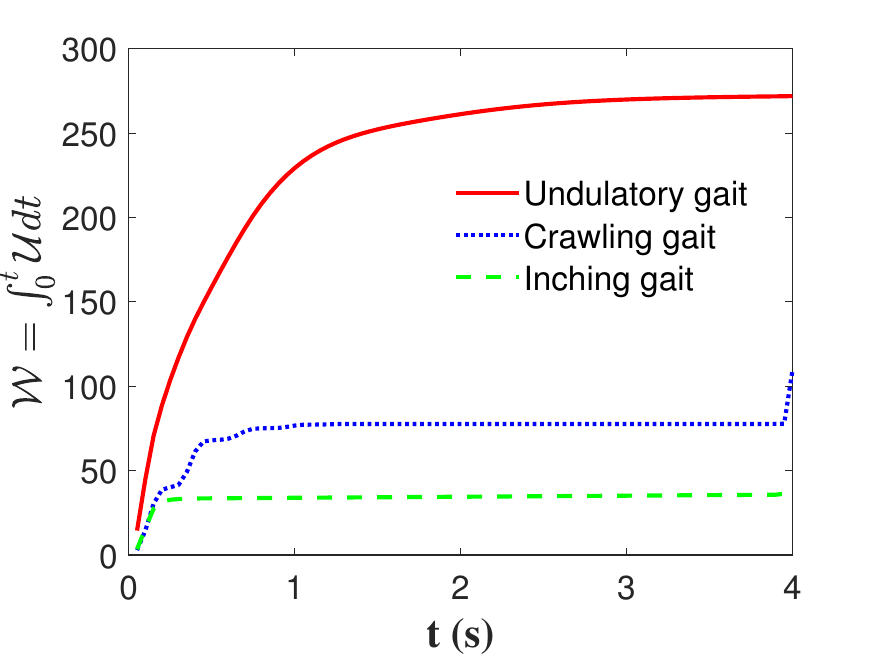}\\
\centerline{(c)\hspace{40ex}(d)}

\caption{Optimal locomotion in the scaled limbless organism: (a) centre of mass ($\x_{cm}$), (b) cost functional ($\mathcal{J}$), (c) internal energy ($\mathcal{U}$), and (d) total energy spent ($\mathcal{W}$).}
\label{fig: 8}
\end{figure}

The optimal centroid evolution, functional minimisation with FBSM iterations and energy expenditure are depicted in Fig. \ref{fig: 8}. The energy expenditure in the scaled organism almost doubled compared to the unscaled case\blue{, but the rest of the trends remained unaltered}. Similarly, Fig. \ref{fig: 9} represents the evolution of internal growth wave distribution and represents the coordinated action of the dorsal-ventral muscles system. \blue{These findings support that inching gait is the most energy efficient and fastest and undulatory gait is the least while crawling falls in between them.}
\begin{figure}[!htb] 
\centering
\includegraphics[width=0.4\textwidth]{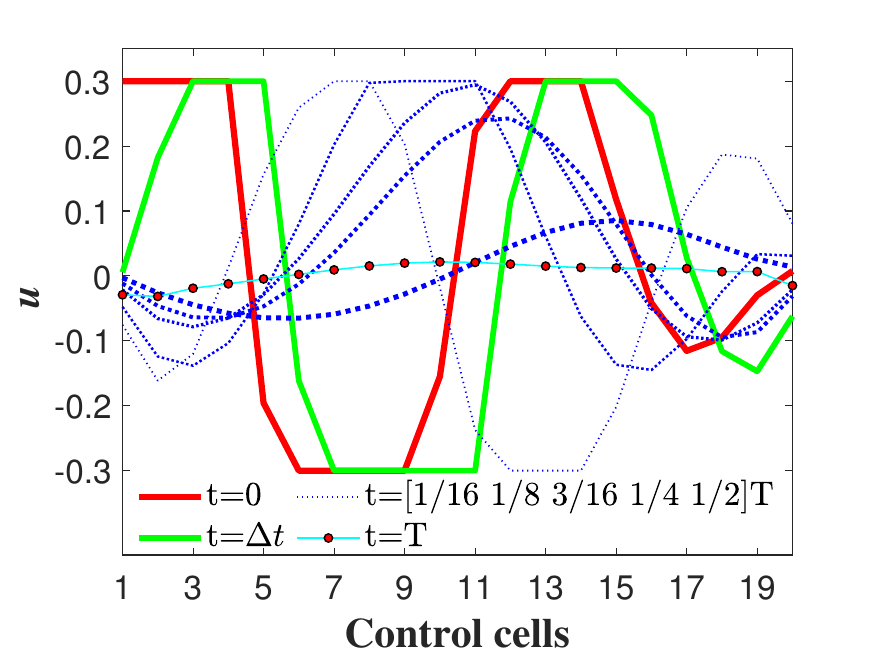} 
\includegraphics[width=0.4\textwidth]{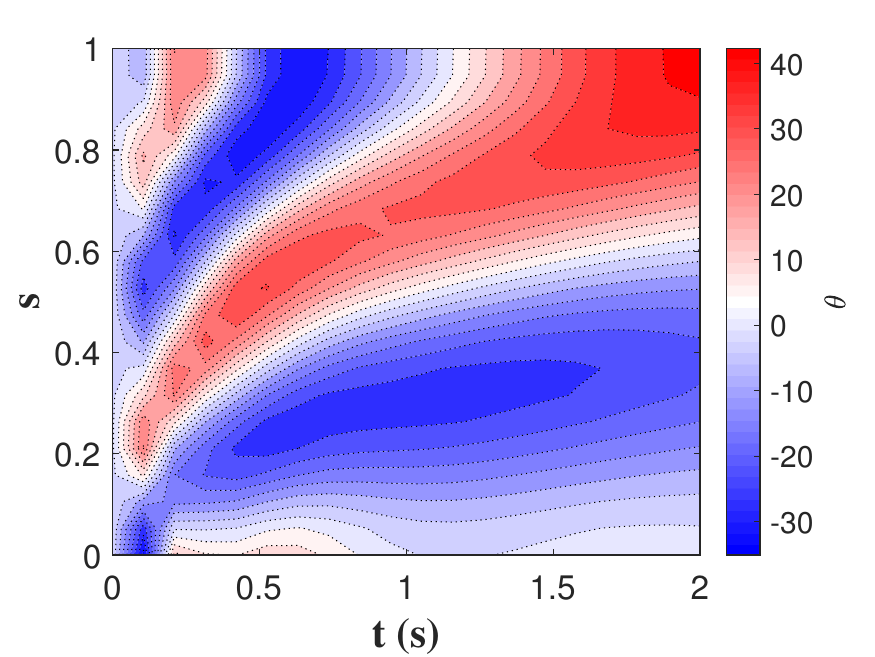}
\includegraphics[width=0.4\textwidth]{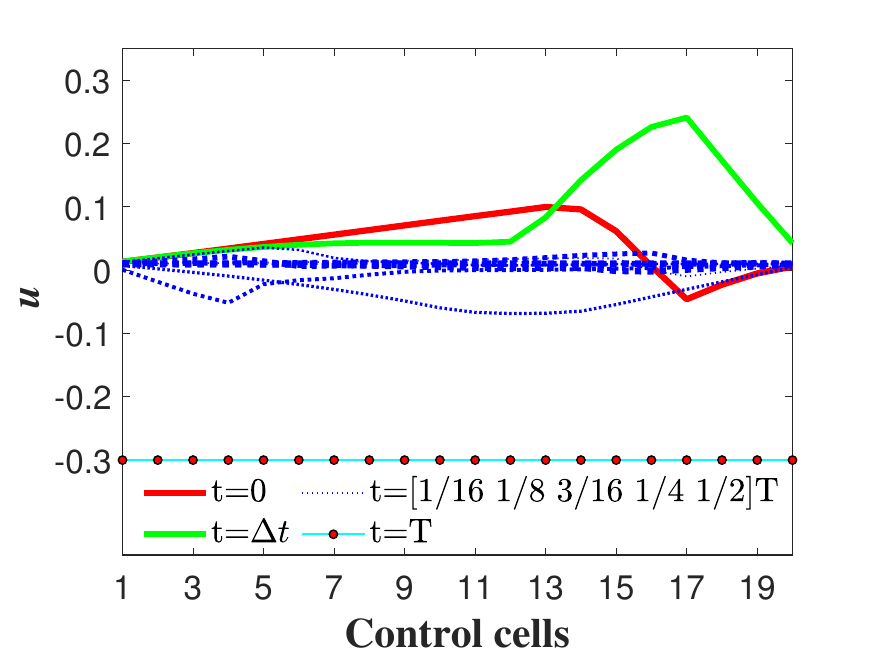}
\includegraphics[width=0.4\textwidth]{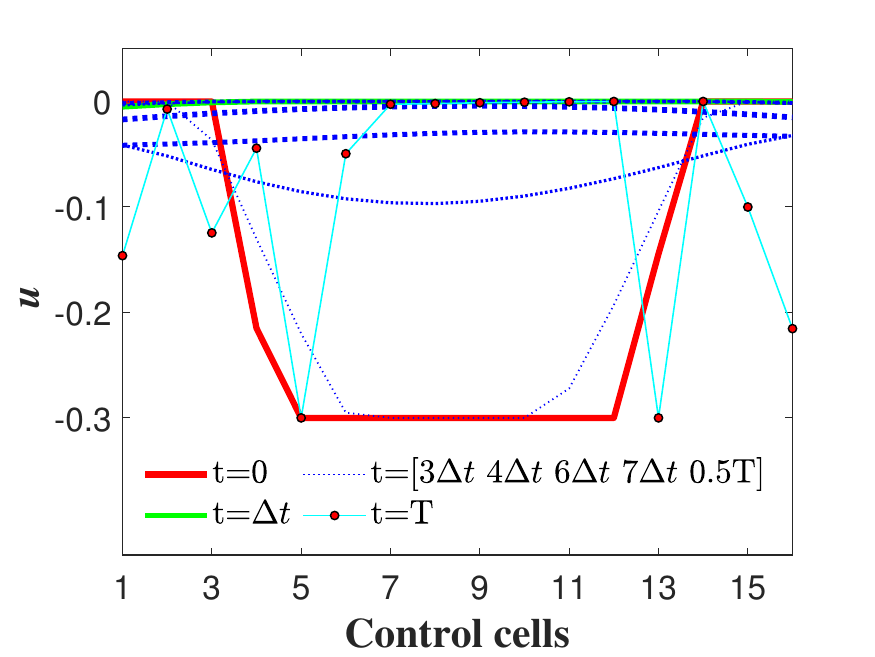}
\caption{Optimal locomotion in the scaled limbless organism: (a) undulatory growth distribution, (b) undulatory body curvature variation, (c) crawling growth distribution, and (d) inching growth distribution.}
\label{fig: 9}
\end{figure}


\section{\blue{Conclusions}}\label{concl}
\label{s:concl}

We presented a unified finite element-based computational framework for simulating the locomotion of limbless 3D soft contractile bodies on \blue{frictional} substrates\blue{, and we have introduced and implemented a strategy for computing optimal growth patterns resorting to optimal control theory}. We believe that the implications of this study will potentially transform the area of computational design and optimal control of limbless soft robots. We have shown with our numerical experiments that the present formulation outcomes are consistent with the locomotion pattern observed in nature. We investigated the role of substrate anisotropy, coordinated action of dorsal-ventral muscles system, and body shape modulation on the net displacement of limbless organism centroid. \blue{These factors } constitute key features of the regulatory mechanism behind the limbless locomotion. Our experiments indicate that for a given growth wave shape, frequency modulation improves the centroid net displacement on anisotropic substrates. We have employed the adjoint-based approach to deduce the sensitivity of objective functional and the efficiency of FBSM has been shown with the GD method integrated with a \blue{Barzikai-Borwein line-search} scheme to accelerate the convergence rate. For tracking type objective functional and considered material model, numerical solution of optimal control problem shows that inching gait is the most energy efficient compared to the undulatory and crawling gait, \blue{when inertial forces and gravitational potential is ignored}.

\blue{The inclusion of inertial and gravitational effects} could potentially alter the energy demands and efficacy of locomotion patterns. Although, the influence of these effects is negligible for the length scale considered here (caterpillar to \blue{\emph{C. elegans}})\blue{, in other applications such as soft robotics or propelled bodies in fluids, may become crucial factors. These effects can be considered in out current framework, which can be formally extended to larger length scales}. In particular, the optimal control problem will be \blue{ODE constrained instead of DAE constrained} optimisation. We discussed the geometric structure of the solution of the optimal control problem and presented the generalised $\tau$-time integration scheme. Although in this work, we have deployed the symplectic Euler time integration scheme, in future studies the presented symplectic structure of the solution could be exploited to design the structure-preserving scheme which could preserve the control Hamiltonian.

%
\section*{Acknowledgement}This work is financially supported by the Spanish Ministry of Science and Innovation, under Severo Ochoa program CEX2018-000797-S, and the research project DynAd2, with reference PID2020-116141GB-I00. The financial support of the local government of Generalitat de Catalunya under grant \blue{2021 SGR 01049 is also acknowledged}.

\bibliographystyle{alpha}
\newcommand{\etalchar}[1]{$^{#1}$}


\appendix
\label{s:appx}

\section{External load vector and tangent matrices}
\label{appendix:a}

\subsection{Shape function and external load vector}
\label{appendix:a1}

\blue{In all our simulations we consider 8 noded linear hexahedral elements, with shape functions defined inthe reference parametric space as ($a=1,2,\dots,m=8$)}
\begin{align}\label{e:g1}
\blue{N^a(\xi, \eta, \zeta)=\frac{1}{8}(1+ \xi )(1\pm \eta )(1\pm \zeta ).}
\end{align}


\blue{In the sequel, we will omit the dependence of shape function $N^a$ on its parametric coordinates.} Unit tangent vector $\tu$ for element $e$ is defined from the positions of nodes $a=1$ and $a=2$. Its expression and its non-zero spatial derivatives are given by,
\begin{align*}
\tu= &\frac{\prescript{e}{}{\x}_2-\prescript{e}{}{\x}_1}{ ||\prescript{e}{}{\x}_2-\prescript{e}{}{\x}_1||}, \\
\frac{\partial \tu} {\partial \prescript{e}{}{\x}_1}= & \frac{1}{||\prescript{e}{}{\x}_2-\prescript{e}{}{\x}_1||} ( -\textbf{I}+\tu \otimes \tu), \\
\frac{\partial \tu} {\partial \prescript{e}{}{\x}_2}= & \frac{1}{||\prescript{e}{}{\x}_2-\prescript{e}{}{\x}_1||} (\textbf{I}-\tu \otimes \tu).
\end{align*}


External load vector $\prescript{e}{}{\g}_{ext}$ at node $a$ of element $e$ is decomposed into surface friction force $\prescript{e}{}{\g}_s$ and bulk viscous/body force $\prescript{e}{}{\g}_v$ (see Eq. \eqref{e:a37}-\eqref{e:gintext})
\begin{align}\label{e:g3}
\prescript{e}{}{\g}_{ext}^a=\prescript{e}{}{\g}_v^a+\prescript{e}{}{\g}_s^a.
\end{align}

Substituting Eq. \eqref{e:a20} and Eq. \eqref{e:a30} into Eq. \eqref{e:gintext}, elemental body force $\prescript{e}{}{\g}_v$ at node $a$ of element $e$ is given by 
\begin{align*}
\prescript{e}{}{\g}_v^a:= \int_{\Omega_e} N^a \zb_o d\Omega_e
= -\int_{\Omega_e} \mu_o N^a \bvf d\Omega_e
= -\int_{\Omega_e} \mu_o N^a N^b d\Omega_e \prescript{e}{}{\bv}^b
= -\prescript{e}{}{\overline{\mathbf{M}}}^{ab} \prescript{e}{}{\bv}^b,
\end{align*}
where $\prescript{e}{}{\overline{\mathbf{M}}}$ is first elemental dissipation matrix defined as 
\[
\prescript{e}{}{\overline{\mathbf{M}}}^{ab}:= \int_{\Omega_e} \mu_o N^a N^b d\Omega_e.
\]

Similarly, substituting Eq. \eqref{e:a15} and Eq. \eqref{e:a30} into Eq. \eqref{e:gintext}, elemental substrate frictional force at node $a$ of element $e$ is given by
\begin{align}\label{e:g5}
\prescript{e}{}{\g}_s^a:= \int_{\Gamma_e} N^a \zt_o d\Gamma_e
= -\int_{\Gamma_e} N^a \mathbf{B} \bvf d\Gamma_e
= -\int_{\Gamma_e} N^a N^b \mathbf{B} d\Gamma_e \prescript{e}{}{\bv}^b.
\end{align}

Using Eq. \eqref{e:g3} and substituting for $\mathbf{B}$ \blue{in Eq. \eqref{e:a16} into Eq. \eqref{e:g5}}, elemental residue vector reduces to
\begin{align}\label{e:g6}
\prescript{e}{}{\g}^a_{ext} = -\left( \prescript{e}{}{\overline{\mathbf{D}}}^{ab}_1 +\prescript{e}{}{\overline{\mathbf{D}}}^{ab}_2 + \prescript{e}{}{\overline{\mathbf{M}}}^{ab} \right) \prescript{e}{}{\bv}^b,
\end{align}
where $\prescript{e}{}{\overline{\mathbf{D}}}^{ab}_1$, and $\prescript{e}{}{\overline{\mathbf{D}}}^{ab}_1$ forms another set of elemental dissipation matrices defined as
\begin{align*}
&\prescript{e}{}{\overline{\mathbf{D}}}^{ab}_1 :=\left(\int_{\Gamma_e} N^a N^b d\Gamma_e \right) \left[\mu_l (\mathbf{I}-\n \otimes \n) + \left(\frac{\mu_f+\mu_b}{2}-\mu_l \right) (\tu \otimes \tu) \right] \\
&\prescript{e}{}{\overline{\mathbf{D}}}^{ab}_2 :=\left(\frac{\mu_f-\mu_b}{2} \right) \left(\int_{\Gamma_e} N^a N^b \tanh{\left(\frac{\bvf \cdot \tu}{\veps} \right)} d\Gamma_e \right) (\tu \otimes \tu).
\end{align*}


\subsection{Linearisation: state tangent matrices}
\label{appendix:a2}

Directional derivative of elemental residue vector $\prescript{e}{}{\g}$ at node $a$ of element $e$ at $\x^d$ and $\bv^d$ along $\eta$ direction is given by
\begin{align*}
\prescript{e}{}{\mathbf{K}}^{ad}\cdot \et:=\frac{d}{d \veps} \prescript{e}{}{\g}^a(\x^d+\veps \et,\bv^d, \bu)\bigr\rvert_{\veps=0}= \mathcal{D}_{\x^d} \prescript{e}{}{\g}^a \cdot \et \\
\prescript{e}{}{\mathbf{G}}^{ad}\cdot \et :=\frac{d}{d \veps} \prescript{e}{}{\g}^a(\x^d,\bv^d+\veps \et, \bu)\bigr\rvert_{\veps=0} = \mathcal{D}_{\bv^d} \prescript{e}{}{\g}^a \cdot \et
\end{align*}

The gradient of the elemental residue vector ($\prescript{e}{}{\g}$) can be decomposed into internal and external tangent contributions and given by
\begin{align*}
\prescript{e}{}{\mathbf{K}}^{ad} &=\mathcal{D}_{\x^d} \prescript{e}{}{\g}^a_{ext} -\mathcal{D}_{\x^d} \prescript{e}{}{\g}^a_{int}, \\
\prescript{e}{}{\mathbf{G}}^{ad} &=\mathcal{D}_{\bv^d} \prescript{e}{}{\g}^a_{ext}.
\end{align*}

\subsubsection{External load tangent matrix}

Tangent contribution of external load vector $\prescript{e}{}{\g}^a_{ext}$ due to perturbation in spatial nodal position $\x^d$ and spatial nodal velocity $\bv^d$ can be evaluated as
\begin{align*}
-\mathcal{D}_{\x^d} \prescript{e}{}{\g}^a_{ext}= & \int_{\Gamma_e} N^a \mathbf{H}^d d\Gamma_e, \\
-\mathcal{D}_{\bv^d} \prescript{e}{}{\g}^a_{ext} =& \prescript{e}{}{\overline{\mathbf{D}}}^{ad}_1 +\prescript{e}{}{\overline{\mathbf{D}}}^{ad}_2 + \prescript{e}{}{\overline{\mathbf{M}}}^{ad} + \int_{\Gamma_e} N^a \mathbf{W}^d d\Gamma_e,
\end{align*}

where integrands are defined as
\begin{align*}
\mathbf{H}^d =& \left[ \left( \frac{\mu_f+\mu_b}{2}-\mu_l \right) + \left(\frac{\mu_f -\mu_b} {2} \right) \text{tanh}{\left(\frac{\bvf \cdot \tu}{\veps} \right)} \right] ((\tu \cdot \bvf)\mathbf{I}+ \tu \otimes \bvf) \frac{\partial \tu} {\partial \prescript{e}{}{\x}^{d}}, \\ 
+& \left(\frac{\mu_f -\mu_b} {2 \veps } \right) \text{sech}^2{\left(\frac{\bvf \cdot \tu}{\veps} \right)} (\tu \cdot \bvf) (\tu \otimes \bvf) \frac{\partial \tu} {\partial \prescript{e}{}{\x}^{d}}, \\
\mathbf{W}^d =& \left(\frac{\mu_f -\mu_b} {2 \veps } \right) \text{sech}^2{\left(\frac{\bvf \cdot \tu}{\veps} \right)} (\tu \cdot \bvf) (\tu \otimes \tu)N^d.
\end{align*}


\subsubsection{Internal load tangent matrix}

The elemental tangent matrix of the internal load vector at node $a$ w.r.t to perturbation in the nodal spatial position of node $d$ of element $e$ is given by (no summation on growth label $g$)
\begin{align}\label{e:h5}
\mathcal{D}_{\x^{d}_{k}} (\prescript{e}{}{\g_{int})}^a_{i} = \int_{\Omega_e} J_g (\zF_g^{-T})_{jm} \frac{\partial N^a}{\partial \zx_m} (\mathbb{A}_e)_{ijkl} (\zF_g^{-T})_{ls} \frac{\partial N^d}{\partial \zx_s} d\Omega_e.
\end{align}

For Neo-Hookean solid, \blue{by substituting referential tangent modulus in Eq. \eqref{e:a11} into Eq. \eqref{e:h5} we obtain after simplification }
\begin{align}\label{e:h6}
\mathcal{D}_{\x^{d}_{k}} (\prescript{e}{}{\g_{int})}^a_{i}=& \int_{\Omega_e} J_g \mu \left(\zF_g^{-T} \nabla_{\zx} N^a \right)_j \left(\zF_g^{-T} \nabla_{\zx} N^b \right)_j d\Omega_e \delta_{ik} + \int_{\Omega_e} J_g \lambda \left(\zF^{-T} \nabla_{\zx} N^a \right)_i \left(\zF^{-T} \nabla_{\zx} N^b \right)_k d\Omega_e \\ \nonumber
& - \int_{\Omega_e} J_g (\lambda \ln{J_e}-\mu) \left(\zF^{-T} \nabla_{\zx} N^b \right)_i \left(\zF^{-T} \nabla_{\zx} N^a \right)_k d\Omega_e.
\end{align}

Introducing scalar and tensor products between vector entities, the elemental tangent matrix due to internal load can be written in the following simplified form
\begin{align}\label{e:h7}
\mathcal{D}_{\x^d} \prescript{e}{}{\g}^a_{int} = & \int_{\Omega_e} J_g \mu \left(\zF_g^{-T} \nabla_{\zx} N^a \right) \cdot \left(\zF_g^{-T} \nabla_{\zx} N^b \right) d\Omega_e \mathbf{I} + \int_{\Omega_e} J_g \lambda \left(\zF^{-T} \nabla_{\zx} N^a \right) \otimes \left(\zF^{-T} \nabla_{\zx} N^b \right) d\Omega_e \\ \nonumber
& - \int_{\Omega_e} J_g (\lambda \ln{J_e}-\mu) \left(\zF^{-T} \nabla_{\zx} N^b \right) \otimes \left(\zF^{-T} \nabla_{\zx} N^a \right) d\Omega_e.
\end{align}


\subsection{Fiber orientation and control tangent matrix}
\label{appendix:a4}

Let us consider a muscles fibre oriented along direction $\zi$ that makes angle $\omega_X$, $\omega_Y$, and $\omega_Z$ respectively with \blue{respect to the the positive directions of $X$, $Y$, and $Z$ axes} of the reference frame. Then, fibre direction expressed in terms of direction cosines and basis vector $\E$ reads
\begin{align}\label{e:j1}
\zi=\cos\omega_X \E_X +\cos\omega_Y \E_Y+\cos\omega_Z \E_Z,
\end{align}


Growth deformation gradient $\zF_g \in \mathit{Sym}$ can be \blue{written} in terms of structural tensor $\mathbf{A}=\zi \otimes \zi$ as
\begin{align*}
\zF_g =\textbf{I}+u \mathbf{A}=
\begin{bmatrix}
1+u \cos^2\omega_X & u \cos\omega_X\cos\omega_Y & u \cos\omega_X\cos\omega_Z \\
u \cos\omega_X\cos\omega_Y & 1+u \cos^2\omega_Y & u \cos\omega_Y\cos\omega_Z \\
u \cos\omega_X\cos\omega_Z & u \cos\omega_Y\cos\omega_Z & 1+u \cos^2\omega_Z \\
\end{bmatrix}.
\end{align*}

Cofactor matrix of growth deformation gradient \blue{$\text{Cof}\,\zF_g:=J_g \zF_g^{-T}$ can be then} expended as
\begin{align*}
\text{Cof}\,\zF_g = \begin{bmatrix}
1+u (\cos^2\omega_Y +\cos^2\omega_Z) & - u \cos\omega_X\cos\omega_Y & -u \cos\omega_X\cos\omega_Z \\
-u \cos\omega_X\cos\omega_Y & 1+u (\cos^2\omega_X+\cos^2\omega_Z) & -u \cos\omega_Y\cos\omega_Z \\
-u \cos\omega_X\cos\omega_Z & -u \cos\omega_Y\cos\omega_Z & 1+u (\cos^2\omega_X+\cos^2\omega_Y) \\
\end{bmatrix},
\end{align*}
and using the orthonormal property of direction cosines, the determinant of $\zF_g$ reduces just to the growth-induced stretching, i.e., $J_g=1+u$. Directional derivative of $\zF_g$ and $\text{Cof}\,\zF_g$ w.r.t $u$ along $\et$ direction is given by
\begin{align}\label{e:j4}
\mathcal{D}_u \zF_g \cdot \et =& \begin{bmatrix}
\cos^2\omega_X & \cos\omega_X\cos\omega_Y & \cos\omega_X\cos\omega_Z \\
\cos\omega_X\cos\omega_Y & \cos^2\omega_Y & \cos\omega_Y\cos\omega_Z \\
\cos\omega_X\cos\omega_Z & \cos\omega_Y\cos\omega_Z & \cos^2\omega_Z \\
\end{bmatrix} \cdot \et \\ \nonumber
\mathcal{D}_u \text{Cof}\,\zF_g \cdot \et =& \begin{bmatrix}
\cos^2\omega_Y +\cos^2\omega_Z & - \cos\omega_X\cos\omega_Y & - \cos\omega_X\cos\omega_Z \\
- \cos\omega_X\cos\omega_Y & \cos^2\omega_X+\cos^2\omega_Z & - \cos\omega_Y\cos\omega_Z \\
- \cos\omega_X\cos\omega_Z & - \cos\omega_Y\cos\omega_Z & \cos^2\omega_X+\cos^2\omega_Y \\
\end{bmatrix} \cdot \et \\ \nonumber
=& (\textbf{I}-\mathcal{D}_u \zF_g) \cdot \et =:\textbf{J} \cdot \et \\ \nonumber
\mathcal{D}_u \zF_g^\mathsf{-1} \cdot \et =& \frac{1}{J_g} (\textbf{J}-\zF_g^\mathsf{-1}) \cdot \et \nonumber.
\end{align}

It is important to note that the nodal position $\x$ and growth $\bu$ act as independent variables in the optimal control formulation. For prescribed spatial position ($\x$), perturbation of intermediate configuration ($\bu \rightarrow \bu+ \veps \et$) leads to the perturbation of elastic state variables ($\zF_e$). Then, directional derivative of $\ln{J_e}$, and $\zF_e$ w.r.t $u$ along $\et$ direction is given by
\begin{align}\label{e:j5}
\mathcal{D}_u \ln{J_e} \cdot \et =& \frac{1}{J_e} \mathcal{D}_u J_e \cdot \et = -\frac{1}{J_g} \et \\ \nonumber
\mathcal{D}_u \zF_e \cdot \et= & \zF \mathcal{D}_u \zF_g^\mathsf{-1} \cdot \et = \frac{1}{J_g} \zF (\textbf{J}-\zF_g^\mathsf{-1}) \cdot \et \\ \nonumber
\mathcal{D}_u \zF_e^\mathsf{-T} \cdot \et= & \zF^\mathsf{-T} \mathcal{D}_u \zF_g^\mathsf{T} \cdot \et = -\zF^\mathsf{-T} (\textbf{J}-\textbf{I}) \cdot \et \nonumber
\end{align}

Directional derivative of elemental internal load vector at node $a$ w.r.t elemental growth $\prescript{e}{}{u}$ along $\et$ direction is given by (See Eq. \eqref{e:a36})
\begin{align}\label{e:j6}
\prescript{e}{}{\B}^{a} \cdot \et=\mathcal{D}_u \prescript{e}{}{\g_{int}^{a}} \cdot \et=\int_{\Omega_{e}} (J_g \mathcal{D}_u \zP_e \cdot \et) \zF_g^{-T} \nabla_{\zx} N^a d\Omega_{e}+ \int_{\Omega_{e}} \zP_e (\mathcal{D}_u \text{Cof}\,\zF_g \cdot \et) \nabla_{\zx} N^a d\Omega_{e}.
\end{align}

Using Eq. \eqref{e:j4} and Eq. \eqref{e:j5}, the directional derivative of elastic first Piola-Kirchhoff tensor for the Neo-Hookean solid is given by (See Eq. \eqref{e:a10})
\begin{align}\label{e:j7}
\mathcal{D}_u \zP_e \cdot \et =& \mu (\mathcal{D}_u\zF_e \cdot \et)+(\lambda \ln{J_e}-\mu) (\mathcal{D}_u \zF_e^\mathsf{-T} \cdot \et) + \lambda \zF_e^\mathsf{-T} (\mathcal{D}_u \ln{J_e} \cdot \et) \\ \nonumber
\Rightarrow J_g \mathcal{D}_u \zP_e \cdot \et =& \left(\mu \zF (\textbf{J}-\zF_g^\mathsf{-1}) - (\lambda \ln{J_e}-\mu) \zF^\mathsf{-T} (\textbf{J}-\zF_g^\mathsf{-1}) - \lambda \zF_e^\mathsf{-T} \right) \cdot \et \nonumber
\end{align}

Substituting Eq. \eqref{e:j4} and Eq. \eqref{e:j7} in Eq. \eqref{e:j6}, elemental control tangent matrix is given by
\begin{align}\label{e:j8}
\prescript{e}{}{\B}^{a}= \int_{\Omega_{e}} \left(\mu \zF (\textbf{J}-\zF_g^\mathsf{-1}) - (\lambda \ln{J_e}-\mu) \zF^\mathsf{-T} (\textbf{J}-\zF_g^\mathsf{-1}) - \lambda \zF_e^\mathsf{-T} \right) J_g \zF_g^{-T} \nabla_{\zx} N^a d\Omega_{e} + \int_{\Omega_{e}} \zP_e \textbf{J} \nabla_{\zx} N^a d\Omega_{e}
\end{align}

For unidirectional muscles actuation (along $X$ direction: $\hat{\zi}=\E_X$ with $\omega_Y=\omega_Z=\frac{\pi}{2}$) leads to the further simplification of control tangent matrix integrands. For instance,
\begin{align*}
\zF_g &=\text{diag}(1+u,1,1), \\
\textbf{J}&=\mathcal{D}_u \text{Cof}\,\zF_g =\text{diag}(0,1,1), \\
J_g \mathcal{D}_u \zP_e &=\frac{\mu}{J_g} \zF(\textbf{J}-\textbf{I})-J_g (\lambda \ln{J_e}-\mu)\zF^{-T} (\textbf{J}-\textbf{I}) -\lambda \zF_e^{-T}.
\end{align*}

Suppose \blue{elements $e^+$ and $e^-$ $\in \{1,2,\ldots,E \} $ share} equal and opposite growth with $u^+=u$ and $u^-=- u$, respectively. Using the above relations, dipole contributions to the elemental control tangent matrix read ($u^+$ is treated as an independent variable)
\begin{align*}
\prescript{e+}{}{\B}^{a}=\frac{\partial \prescript{e^+}{}{\g_{int}}^{a}}{\partial u^+} &= \int_{\Omega_{e^+}} (J_g \mathcal{D}_u \zP_e ) \zF_g^{-T} \nabla_{\zx} N^a d\Omega_{e^+}+ \int_{\Omega_{e^+}} \zP_e \textbf{J}_x \nabla_{\zx} N^a d\Omega_{e^+}, \\ 
\prescript{e-}{}{\B}^{a}=\frac{\partial \prescript{e^-}{}{\g_{int}}^{a}}{\partial u^+} &= -\int_{\Omega_{e^-}} (J_g \mathcal{D}_u \zP_e) \zF_g^{-T} \nabla_{\zx} N^a d\Omega_{e^-} - \int_{\Omega_{e^-}} \zP_e \textbf{J}_x \nabla_{\zx} N^a d\Omega_{e^-}.
\end{align*}


\section{Barzikai-Borwein \blue{line-search} method}\label{appendix:b}

Let us consider the reduced minimisation problem Eq. \eqref{e:b11}, we update the decision variable $\bu^{k+1}$ by moving $\theta^k$ along the search direction $\dd^k:=-\nabla_{\bu} \hat{\mathcal{J}} (\bu^k)$
\begin{align*}
\bu^{k+1}=\bu^k+\theta^k \dd^k.
\end{align*}

With the analogy of the Newton-Raphson method, one can consider the following equivalent form
\begin{align*}
\bu^{k+1}=\bu^k+\theta^k \textbf{I} \dd^k=\bu^k- \textbf{H}_k^{-1} (-\dd^k),
\end{align*}
where $\textbf{H}_k^{-1}=\theta^k \textbf{I}$ or $\textbf{H}_k=\theta_k^{-1} \textbf{I}$. Following the Quasi-Newton approach, Hessian at $\bu^k$ can be approximated with a two-point secant approximation
\begin{align}\label{e:F3}
\textbf{H}_k \Delta \bu \approx - \Delta \dd,
\end{align}
where $\Delta \bu:=\bu^k-\bu^{k-1}$, and $\Delta \dd:=\dd^k-\dd^{k-1}=\nabla_{\bu} \hat{\mathcal{J}} (\bu^{k-1})-\nabla_{\bu} \hat{\mathcal{J}} (\bu^k)$. With Eq. \eqref{e:F3}, the optimal step size can be obtained in the following two ways \cite{barzilai88}:

\textbf{Longer step size ($\theta_L^k$)}: With $\textbf{H}_k=(\theta^k)^{-1} \textbf{I}$, then $\theta_L^k$ can be obtained in a least square sense such that $\textbf{H}_k \Delta \bu + \Delta \dd \approx \0$
\begin{align*}
\theta_L^k= \argmin_{\theta \in \mathbb{R}^\mathsf{+}} & ||\theta^{-1} \Delta \bu + \Delta \dd||_{L^2}=- \frac{\Delta \bu ^\mathsf{T} \Delta \bu} {\Delta \bu ^\mathsf{T} \Delta \dd}.
\end{align*}

\textbf{Shorter step size ($\theta_s^k$)}: From symmetry, $\textbf{H}_k^{-1}=\theta^k \textbf{I}$, then $\theta_s^k$ can be obtained in a least square sense such that $\Delta \bu + \textbf{H}_k^{-1} \Delta \dd \approx \0$
\begin{align*}
\theta_s^k= \argmin_{\theta \in \mathbb{R}^\mathsf{+}} & ||\Delta \bu + \theta \Delta \dd||_{L^2}=- \frac{\Delta \bu ^\mathsf{T} \Delta \dd} {\Delta \dd ^\mathsf{T} \Delta \dd }.
\end{align*}


\subsection{Remedy for negative step size}

Barzikai-Borwein algorithm with short step size $\theta_s^k$ performs numerically better than earlier long step size $\theta_L^k$. Unfortunately, for non-convex functions, the Barzikai-Borwein algorithm may generate a negative step length. In \cite{dai15}, a lower bound on step length $\theta_m^{k}$ is proposed, which is computed as a geometric mean of the shortest and longest step size:
\begin{align}\label{e:F6}
\theta_m^{k} := \left(\theta_s^k \theta_L^k \right)^{1/2}
=\frac{||\Delta \bu ||} {||\Delta \dd ||}
\end{align}

\blue{Consequently, the step parameter $\theta^k$ is updated according to}
\begin{equation}\label{e:X4}
\theta^k= \begin{cases}
\theta_s^k, & \text{if } \theta_s^k>0, \\
\theta_m^k , & \text{if } \theta_s^k<0.
\end{cases}
\end{equation}

\blue{Algorithm \ref{a:2} describes the update process of the \blue{line-search} employed in our examples}.

\begin{algorithm}
\caption{Stabilized Barzikai-Borwein algorithm}
\begin{algorithmic}
\Require $\theta^{max}=1, \bu^{k-1},\bu^k, \dd^{k-1},\dd^k$

\State $\theta_{th}^k \leftarrow \frac{\theta_{max}}{|| \dd^k||}$ \Comment{Threshold step length}
\State $\Delta\dd \leftarrow \dd^{k}-\dd^{k-1}$
\State $\Delta\bu \leftarrow \bu^{k}-\bu^{k-1}$
\State $||\Delta\dd|| \leftarrow \sqrt{\Delta\dd^\mathsf{T} \Delta\dd}$
\State $\blue{\theta_s} \leftarrow -\frac{\Delta\bu^\mathsf{T} \Delta\dd }{||\Delta\dd||^2}$ \Comment{Short step length}
\If{$\theta_s^k > 0$}
\State $ \theta^k \leftarrow \min(\theta_s^k,\theta_{th}^k) $ \Comment{Stabilization step}
\Else
\State $\theta_m^k \leftarrow \frac{||\Delta\bu||}{||\Delta\dd||}$ \Comment{Correction for negative step length}
\State $ \theta^k \leftarrow \min(\theta_m^k,\theta_{th}^k) $ \Comment{Stabilization step}
\EndIf
\Ensure $\theta^k $ \Comment {\blue{Line-search} parameter}
\end{algorithmic}
\label{a:2}
\end{algorithm}


\subsection{\blue{Stabilised} Barzikai-Borwein algorithm}

It is important to note that the Barzikai-Borwein method converges R-super-linearly for strictly convex quadratics in 2 or 3 dimensions \cite{barzilai88}. However, for the general n-dimensional objective function, the Barzikai-Borwein method converges globally with R-linear rate \cite{burdakov19, yahaya21}. It has been proven that the Barzikai-Borwein method significantly improves the convergence rate of the GD algorithm. However, a major drawback is that sometimes the Barzikai-Borwein method generates too long step length and the GD algorithm may not converge even for strongly convex functions. For such a scenario, a stabilisation technique is introduced in \cite{burdakov19} to improve the efficiency of the Barzikai-Borwein method and known as stabilized Barzikai-Borwein algorithm. The idea is to restrict the maximum allowed step length increment per iteration i.e. for some $\theta_{max}>0$, whenever $||\theta^k \dd^k|| >\theta_{max} $, impose
\begin{align}\label{e:F7}
||\bu^{k+1}-\bu^k ||= \theta_{max}.
\end{align}

One can conclude, threshold value of $\theta_k$ can be obtained as $\theta_{th}^k=\frac{\theta_{max}}{|| \dd^k|| }$. Hence, the \blue{line-search} parameter can be updated as
\begin{equation}\label{e:X4}
\theta^k= \begin{cases}
\min(\theta_s^k,\theta_{th}^k), & \text{if } \theta_s^k>0, \\
\min(\theta_m^k,\theta_{th}^k) , & \text{if } \theta_s^k<0.
\end{cases}
\end{equation}

\end{document}